\newtheorem{thm}{Theorem}[section]
\newtheorem{lem}[thm]{Lemma}
\newtheorem{cor}[thm]{Corollary}
\newtheorem{prop}[thm]{Proposition}
\newtheorem{example}[thm]{Example}
\newtheorem*{prob*}{Open problem}
\theoremstyle{definition}
\newtheorem{defi}[thm]{Definition}
\theoremstyle{remark}
\newtheorem{rem}[thm]{Remark}
\newtheorem*{rem*}{Remark}
\newcommand{\kringel}{\mathbin{\raise1pt\hbox{$\scriptstyle\circ$}}} 
\newcommand{\pkt}{\mathbin{\raise0pt\hbox{$\scriptstyle\bullet$}}}
\newcommand{\ph}{\phantom{0000}}
\newcommand{\lf}{\lfloor}
\newcommand{\rf}{\rfloor}
\newcommand{\HH}{\mathbb{H}}
\newcommand{\Lg}{\mathfrak{g}}
\newcommand{\abs}[1]{\lvert#1\rvert}
\newcommand{\al}{\alpha}
\newcommand{\de}{\delta}
\newcommand{\ra}{\rightarrow}  
\renewcommand{\phi}{\varphi}
\begin{document}

\title[Binomial sums of Partition functions]{Estimates on binomial sums 
of partition functions} 

\author[D. Burde]{Dietrich Burde}
\address{Mathematisches Institut\\
  Heinrich-Heine-Universit\"at\\
  Universit\"atsstr. 1\\
  40225 D\"us\-sel\-dorf\\
  Germany}
\email{dietrich@math.uni-duesseldorf.de}

\subjclass{Primary 11P81, 17B30}

\begin{abstract}
Let $p(n)$ denote the partition function and define 
$p(n,k)=\sum_{j=0}^{k}\binom{n-j}{k-j}p(j)$ where $p(0)=1$.
We prove that $p(n,k)$ is unimodal and satisfies 
$p(n,k) < \frac{2.825}{\sqrt{n}}\, 2^n $ for fixed $n\ge 1$ and all 
$1\le k\le n$.
This result has an interesting application:
the minimal dimension of a faithful module for a $k$-step nilpotent Lie
algebra of dimension $n$ is bounded by $p(n,k)$ and hence by
$\frac{3}{\sqrt{n}}\, 2^n $, independently of $k$.
So far only the bound $n^{n-1}$ was known.
We will also prove that
$p(n,n-1)<\sqrt{n}\exp(\pi\sqrt{2n/3})$ for $n\ge 1$ and
$p(n-1,n-1)<\exp (\pi\sqrt{2n/3} )$.
\end{abstract}

\maketitle

\section{Introduction}

Let $\Lg$ be a Lie algebra of dimension $n$ over a field $K$ of
characteristic zero. An invariant of $\Lg$ is defined by
$$\mu (\Lg):=\min \{\dim M \mid M \text{ is a faithful $\Lg$--module}\}$$   
Ado's theorem asserts that $\mu(\Lg)$ is finite. Following the details of
the proof we see that $\mu(\Lg)\le f(n)$ for a function $f$ only
depending on $n$.
It is an open problem to determine good upper bounds for $f(n)$
valid for a given class of Lie algebras of dimension $n$.
Interest for such a refinement of Ado's theorem  
comes from a question of Milnor on fundamental groups of complete
affine manifolds \cite{M2}.
The existence of left-invariant affine structures on a Lie group $G$
of dimension $n$ implies $\mu(\Lg)\le n+1$ for its Lie algebra $\Lg$. 
It is known that there exist nilpotent Lie algebras which do not
satisfy this bound \cite{BG}. It is however difficult to prove good bounds
for $\mu (\Lg)$ only depending on $\dim \Lg$.
In $1937$ Birkhoff \cite{BIR} proved $\mu(\Lg)\le 1+n+n^2+\dots+n^{k+1}$
for all nilpotent Lie algebras $\Lg$ of dimension $n$ and 
nilpotency class $k$.
His construction used the universal enveloping algebra of $\Lg$.
In $1969$ this method was slightly improved by Reed \cite{REE} who
proved  $\mu(\Lg)\le 1+n^k$. 
That yields the bound $\mu(\Lg)\le 1+n^{n-1}$
only depending on $n$.
We have improved the bound in \cite{BU5} as follows:
\begin{thm}\label{th1}
Let $\Lg$ be a nilpotent Lie algebra of dimension $n$ and nilpotency class
$k$. Denote by $p(n)$ the number of partitions of 
$n$ into positive integers with $p(0)=1$ and set  
$$p(n,k)=\sum_{j=0}^{k}\binom{n-j}{k-j}p(j).$$
Then $\mu(\Lg)\le p(n,k)$.
\end{thm}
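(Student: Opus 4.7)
The plan is to refine the Birkhoff--Reed construction \cite{BIR,REE} by replacing the naive total-degree filtration on the universal enveloping algebra $U(\Lg)$ with a \emph{weighted} filtration adapted to the lower central series. Write $\Lg = C^1 \supset C^2 \supset \cdots \supset C^k \supset C^{k+1} = 0$ and set $d_i := \dim(C^i/C^{i+1})$; then $d_i \ge 1$ for $1 \le i \le k$ (since the class is exactly $k$) and $d_1 + \cdots + d_k = n$. Choose an ordered basis $y_1,\ldots,y_n$ of $\Lg$ compatible with the filtration, and assign each $y_j$ the weight $w_j := i$ whenever $y_j \in C^i \setminus C^{i+1}$. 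Extending additively to PBW monomials by $w(y_1^{a_1}\cdots y_n^{a_n}) := \sum_j a_j w_j$, let $J \subset U(\Lg)$ be the span of all PBW monomials of weight $> k$ and put $V := U(\Lg)/J$.

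The first task is to check that $J$ is a two-sided ideal, so that $V$ is an $\Lg$-module under left multiplication. This rests on the inclusion $[C^i, C^j] \subseteq C^{i+j}$: every relation $y_a y_b = y_b y_a + [y_a, y_b]$ replaces a weight-$(w(y_a)+w(y_b))$ product by a sum of PBW monomials each of weight $\ge w(y_a)+w(y_b)$, so straightening an arbitrary product of generators into PBW form can never lower the weight. Faithfulness of $V$ is then immediate: any nonzero $x \in \Lg$ is a combination of basis vectors of weight $\le k$, hence lies outside $J$, so $x \cdot \bar 1 = \bar x \ne 0$ in $V$. Thus $\mu(\Lg) \le \dim V$, and PBW gives the clean expression
\[
\dim V \;=\; \sum_{j=0}^{k}\,[z^j] \prod_{i=1}^{k}(1 - z^i)^{-d_i}.
\]

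The main remaining task---and the chief obstacle---is to bound this partial sum by $p(n,k)$ \emph{uniformly} in the profile $(d_1,\ldots,d_k)$. I would argue by monotonicity: transferring one unit of dimension from $d_i$ to $d_1$ (for some $i \ge 2$) multiplies the generating function by $(1-z^i)/(1-z) = 1 + z + \cdots + z^{i-1}$, a polynomial with non-negative coefficients, which can only increase every individual coefficient and hence every partial sum. Constrained by $d_i \ge 1$, the partial sum is therefore maximized at the extremal profile $d_1 = n-k+1,\ d_2 = \cdots = d_k = 1$. To finish, I evaluate the generating function there: modulo $z^{k+1}$ the factors $(1-z^i)^{-1}$ for $i > k$ are trivial, and the Euler product $\sum_j p(j)z^j = \prod_{i \ge 1}(1-z^i)^{-1}$ yields
\[
\prod_{i=1}^{k}(1-z^i)^{-d_i} \;\equiv\; (1-z)^{-(n-k+1)} \prod_{i \ge 2}(1-z^i)^{-1} \;=\; (1-z)^{-(n-k)} \sum_{j \ge 0} p(j)\,z^j \pmod{z^{k+1}}.
\]
Expanding $(1-z)^{-(n-k)}$ and collapsing the inner sum via the hockey-stick identity $\sum_{s=0}^{k-j}\binom{s+n-k-1}{n-k-1} = \binom{n-j}{k-j}$ recovers exactly $\sum_{j=0}^{k}\binom{n-j}{k-j}p(j) = p(n,k)$, completing the chain $\mu(\Lg) \le \dim V \le p(n,k)$.
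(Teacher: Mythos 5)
Your argument is correct, and it is essentially the approach of the paper: Theorem \ref{th1} is not reproved here but quoted from \cite{BU5}, whose refinement of the Birkhoff--Reed construction \cite{BIR,REE} is exactly the weighted PBW filtration of $U(\Lg)$ by the lower central series and the quotient by the span of monomials of weight $>k$. Your counting of $\dim V$ — reduction to the extremal profile $d_1=n-k+1$, $d_2=\cdots=d_k=1$ via the nonnegativity of $(1-z^i)/(1-z)$, followed by the hockey-stick evaluation — correctly recovers $p(n,k)=\sum_{j=0}^{k}\binom{n-j}{k-j}p(j)$.
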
 

The aim of this paper is to study the function $p(n,k)$ and to give
upper bounds for it. We will show the following: 

\begin{thm}\label{th2}
The function $p(n,k)$ is unimodal for fixed $n\ge 4$. More precisely we
have with $k(n)=\lfloor \frac{n+3}{2} \rfloor$
\begin{gather*}
p(n,1)<p(n,2)<\dots <p(n,k(n)-1)< p(n,k(n)),\\
p(n,k(n)) > p(n,k(n)+1)> \dots > p(n,n-1)>p(n,n).
\end{gather*}    
\end{thm}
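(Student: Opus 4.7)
The plan is to reformulate Theorem~1.2 as a statement about the forward difference $\Delta(n,k) := p(n,k+1) - p(n,k)$: it must be strictly positive for $0 \le k < k(n)$ and strictly negative for $k(n) \le k \le n-1$. Two identities prepare the ground. Applying $\binom{n-j}{k-j} = \binom{n-1-j}{k-j} + \binom{n-1-j}{k-1-j}$ termwise in the definition gives the Pascal-type recurrence
\[
p(n,k) = p(n-1,k) + p(n-1,k-1) \qquad (1 \le k \le n),
\]
and hence $\Delta(n,k) = \Delta(n-1,k) + \Delta(n-1,k-1)$. A separate direct expansion, using $\binom{n-j}{k+1-j}/\binom{n-j}{k-j} = (n-k)/(k+1-j)$, yields the closed form
\[
\Delta(n,k) = p(k+1) + \sum_{j=0}^{k} \binom{n-j}{k-j}\, \frac{n+j-2k-1}{k+1-j}\, p(j),
\]
whose summands are all nonnegative as soon as $2k+1 \le n$. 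This already gives $\Delta(n,k) > 0$ for $0 \le k \le \lfloor(n-1)/2\rfloor$, handling the ascending half of the theorem except for at most one or two indices nearest the peak.

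For the remaining indices I would induct on $n$, with the base case $p(4,\cdot) = (1,5,11,14,12)$ verified by direct computation. The $\Delta$-recurrence is sign-preserving: if $\Delta(n-1,\cdot)$ switches sign from positive to negative exactly once at $k(n-1)$, then so does $\Delta(n,\cdot)$, with sign change at $k(n-1)$ or $k(n-1)+1$. Only the entry $\Delta(n,k(n-1)) = \Delta(n-1,k(n-1)-1) + \Delta(n-1,k(n-1))$ is ambiguous, being the sum of the last positive and first negative value of the previous row. Comparing with $k(n) = \lfloor(n+3)/2\rfloor$ shows $k(n) - k(n-1) \in \{0,1\}$, equal to $1$ precisely when $n$ is odd, so the inductive step reduces to a parity-dependent sign condition on this ambiguous entry: for $n$ even one needs $|\Delta(n-1,k(n-1))| > \Delta(n-1,k(n-1)-1)$, and for $n$ odd the reverse.

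The main obstacle is propagating this parity-dependent quantitative comparison through the induction. A sign-only hypothesis is insufficient, because the magnitudes of the last positive and first negative values of $\Delta(n,\cdot)$ vary considerably with $n$ and neither dominates uniformly. I would strengthen the inductive hypothesis to carry a short window of consecutive values of $\Delta(n,\cdot)$ centred at $k(n)$, together with a list of parity-dependent linear inequalities among them; each window entry at level $n$ is a sum of two consecutive window entries at level $n-1$ by the $\Delta$-recurrence, so a case analysis by the parity of $n$ verifies that the required inequalities persist. An arguably cleaner alternative route is to establish strict log-concavity of $p(n,\cdot)$ by induction---log-concavity is preserved under convolution with $(1,1)$---which yields unimodality at once, after which the peak location is again pinned down by the same parity-dependent comparison on $\Delta(n,k(n-1))$. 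Either way, the $\Delta$-recurrence and the explicit formula above do the bulk of the work, and the entire technical subtlety lies in selecting auxiliary inequalities that close under the recursion.
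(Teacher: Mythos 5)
Your reduction to the sign of $\Delta(n,k)=p(n,k+1)-p(n,k)$, the Pascal recurrence $p(n,k)=p(n-1,k)+p(n-1,k-1)$, and the induction showing that a single sign change in $\Delta(n-1,\cdot)$ propagates to $\Delta(n,\cdot)$ are all correct and coincide with the paper's unimodality argument (its Lemma~2.3 and Proposition~2.4); your closed form for $\Delta(n,k)$ is also correct and gives the ascending chain for $k\le\lfloor(n-1)/2\rfloor$ cleanly. But the proof stops exactly where the theorem gets hard: locating the peak at $k(n)=\lfloor\frac{n+3}{2}\rfloor$ rather than one index to either side. You correctly identify that this comes down to a parity-dependent magnitude comparison on the one ambiguous entry $\Delta(n,k(n-1))=\Delta(n-1,k(n-1)-1)+\Delta(n-1,k(n-1))$, and you propose to carry ``a short window of values with parity-dependent linear inequalities'' through the induction --- but you never exhibit these inequalities or show they close under the recurrence. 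That is not a routine verification: the relevant comparison is between the last positive and first negative difference near $k\approx n/2$, whose ratio drifts with $n$, and it is not at all clear that any finite list of linear inequalities among a fixed window of entries is invariant under convolution with $(1,1)$. The log-concavity alternative has the same defect (it yields unimodality, and only non-strict unimodality at that, but says nothing about where the peak sits).

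The paper does not attempt to push this comparison through the induction at all. Instead it proves the two boundary inequalities directly for every $n$: writing $p(n,k)-p(n,k-1)$ as a positive multiple of $\sum_{j=0}^{k}(n+1-2k+j)\binom{n-j}{k-j}p(j)$, it shows this sum is positive at $k=k(n)$ by explicitly evaluating the first few (the only possibly negative) terms as a rational function of $n$ (Lemma~\ref{links}), and negative at $k=k(n)+1$ by bounding $\sum_j j\binom{n-j}{k-j}p(j)$ from above via the generating function $\sum_j jp_k(j)q^j$ and the infinite-product estimates of Lemmas~\ref{gen} and~\ref{rest} with $q=252/500$, against the lower bound $p(n,k)>\frac{1745}{512}\binom{n}{k}$ of Lemma~\ref{gr}, together with computer verification for $n<500$ (Lemma~\ref{rechts}). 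Some quantitative input of this kind --- explicit leading-term computations or generating-function bounds --- appears unavoidable, and your proposal contains none of it, so as written it establishes unimodality but not the stated location of the maximum.
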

\begin{thm}\label{th3}
There is the following estimate for $p(n,k)$:
\begin{align*}
p(n,k) & < \frac{2.825}{\sqrt{n}}\, 2^n \;\text{ for fixed $n\ge 1$ and all }
1\le k\le n
\end{align*}
\end{thm}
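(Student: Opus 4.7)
The plan is to combine the unimodality statement of Theorem \ref{th2} with sharp estimates for a central binomial coefficient and for Euler's partition generating function. By Theorem \ref{th2}, the function $k \mapsto p(n,k)$ attains its maximum at $k = k(n) := \lfloor (n+3)/2 \rfloor$ for $n \geq 4$, so after a finite check for $n \in \{1,2,3\}$ the task reduces to bounding $p(n,k(n))$. Write $k := k(n)$ and note the crude bound $k/n \leq 1/2 + 3/(2n)$.

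The first step is to peel off the dependence on $j$ in the binomial coefficient. Since $(k-i)/(n-i)$ is a decreasing function of $i$ when $k \leq n$ (its $i$-derivative is $(k-n)/(n-i)^2 \leq 0$),
\[
\binom{n-j}{k-j} \;=\; \binom{n}{k}\prod_{i=0}^{j-1}\frac{k-i}{n-i} \;\leq\; \binom{n}{k}\left(\frac{k}{n}\right)^{j}.
\]
Plugging into the definition of $p(n,k)$ and extending the finite sum to a full series yields
\[
p(n,k) \;\leq\; \binom{n}{k} \sum_{j=0}^{\infty} \left(\frac{k}{n}\right)^{j} p(j) \;=\; \binom{n}{k}\,\prod_{m=1}^{\infty}\frac{1}{1-(k/n)^{m}},
\]
by Euler's identity $\sum_j p(j)\,x^j = \prod_m (1-x^m)^{-1}$, valid since $k/n < 1$.

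Two ingredients should now finish the estimate asymptotically. The classical central-binomial bound gives $\binom{n}{k} \leq \binom{n}{\lfloor n/2 \rfloor} \leq \sqrt{2/(\pi n)}\,2^n$, where $\sqrt{2/\pi} \approx 0.7979$. As $n \to \infty$ the ratio $k/n$ tends to $1/2$, so the Euler product approaches $\prod_{m \geq 1}(1-2^{-m})^{-1} \approx 3.4628$. The product of the two limiting constants is about $2.76$, comfortably below the target $2.825$.

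The genuinely delicate point is the intermediate-$n$ regime: for small odd $n$ the ratio $k/n$ (equal to $4/5,\,5/7,\,2/3,\ldots$) substantially exceeds $1/2$, and the Euler product at such values is noticeably larger than $3.46$, which could threaten to push the overall constant above $2.825$. I therefore expect the main obstacle to be a careful treatment of this transition: I would fix a threshold $N_0$, verify $p(n, k(n)) < 2.825\, n^{-1/2} 2^n$ directly (e.g.\ by tabulation of the values) for $n \leq N_0$, and apply the analytic bound above for $n > N_0$. One picks $N_0$ just large enough that the finite-$n$ corrections to both the central binomial coefficient and the Euler product together fit inside the roughly $0.06$ slack between the asymptotic constant $2.76$ and the claimed constant $2.825$; the computational task is to show that $N_0$ can be taken in a reasonable range.
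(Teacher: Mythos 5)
Your proposal follows essentially the same route as the paper: unimodality reduces everything to $k=k(n)=\lfloor(n+3)/2\rfloor$, the bound $\binom{n-j}{k-j}\le\binom{n}{k}(k/n)^j$ combined with the Euler product and the Stirling estimate $\binom{n}{k(n)}<2^n/\sqrt{\pi n/2}$ handles large $n$, and small $n$ are checked by computer. The paper takes the threshold $N_0=500$, so that $k/n\le 252/500$ and the infinite product is bounded by $3.5403$, yielding the constant $3.5403\cdot\sqrt{2/\pi}\approx 2.8246$; note that the actual margin below $2.825$ is therefore far thinner than the $0.06$ asymptotic slack you estimated, which is exactly why the threshold must be taken that large.
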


\begin{cor}
Let $\Lg$ be a nilpotent Lie algebra of dimension $n$. Then
$$\mu(\Lg)<\frac{3}{\sqrt{n}}\, 2^n$$
\end{cor}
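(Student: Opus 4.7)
The corollary is designed to be an immediate consequence of Theorems~\ref{th1} and~\ref{th3}, so the plan is essentially to chain the two inequalities and observe that the numerical slack between $2.825$ and $3$ is enough to absorb any boundary issues.

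My first step is to fix a nilpotent Lie algebra $\Lg$ of dimension $n\ge 1$ and let $k$ denote its nilpotency class. Since $\Lg$ is nilpotent of dimension $n$, the class satisfies $1\le k\le n$ (in fact $k\le n-1$ for $n\ge 2$, but we only need the weaker bound to stay in the range where Theorem~\ref{th3} applies). By Theorem~\ref{th1} we then have
\[
\mu(\Lg)\ \le\ p(n,k).
\]

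Next I apply Theorem~\ref{th3}, which gives
\[
p(n,k)\ <\ \frac{2.825}{\sqrt{n}}\, 2^{n}
\]
for every admissible pair $(n,k)$. Concatenating the two estimates yields $\mu(\Lg)<\frac{2.825}{\sqrt{n}}\,2^{n}$, and since $2.825<3$ the claimed bound
\[
\mu(\Lg)\ <\ \frac{3}{\sqrt{n}}\, 2^{n}
\]
follows at once. The key conceptual point worth flagging in the write-up is that the right-hand side no longer depends on the nilpotency class $k$: Theorem~\ref{th3} provides a bound that is uniform in $k$, and this is precisely what allows us to pass from the class-dependent bound of Theorem~\ref{th1} to a bound depending only on $\dim\Lg$.

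There is no real obstacle here; the only thing to be mindful of is verifying that the range $1\le k\le n$ in Theorem~\ref{th3} actually covers every nilpotency class that can occur for a nilpotent Lie algebra of dimension $n$, and that the numerical gap $3-2.825$ leaves no edge case (such as very small $n$) uncovered. Both checks are routine.
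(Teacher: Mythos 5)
Your proof is correct and is exactly the argument the paper intends: the corollary is stated as an immediate consequence of Theorem~\ref{th1} and Theorem~\ref{th3}, obtained by chaining $\mu(\Lg)\le p(n,k)<\frac{2.825}{\sqrt n}2^n<\frac{3}{\sqrt n}2^n$. Your remarks about the range of $k$ and the uniformity in $k$ are accurate and cover the only points that need checking.
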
 

A nilpotent Lie algebra $\Lg$ of dimension $n$ and nilpotency class $k$
is called {\it filiform} if $k=n-1$. 
In that case the estimate for $\mu (\Lg)$ can be improved. In fact it holds
$\mu(\Lg)\le 1+p(n-2,n-2)$ which was the motivation to prove the
following propositions:

\begin{prop}\label{pr1}
Let $\al=\sqrt{\frac{2}{3}}\pi$. Then 
$$p(n-1,n-1)<e^{\al \sqrt{n}}\quad \text{for all} \;\; n\ge 1.$$
\end{prop}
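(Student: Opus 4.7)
The first step is a trivial but essential simplification. Since $\binom{n-1-j}{n-1-j}=1$ for every $0\le j\le n-1$, the defining sum for $p(n,k)$ collapses to
\[
p(n-1,n-1)\;=\;\sum_{j=0}^{n-1}p(j),
\]
so the proposition is equivalent to the partial-sum bound $\sum_{j=0}^{n-1}p(j)<e^{\al\sqrt{n}}$.

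I would attack this by induction on $n$. The opening cases are a finite check with ample room to spare: $p(0)=1$, $p(0)+p(1)=2$, $p(0)+p(1)+p(2)=4$, $\ldots$, all dwarfed by $e^{\al\sqrt{n}}$ (which is already $\approx 13$ at $n=1$). Assuming the bound at $n-1$, the induction step reduces to showing
\[
p(n-1)\;<\;e^{\al\sqrt{n}}-e^{\al\sqrt{n-1}}.
\]

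The right-hand side I would estimate from below via the derivative of $f(x)=e^{\al\sqrt{x}}$, namely $f'(x)=\tfrac{\al}{2\sqrt{x}}e^{\al\sqrt{x}}$. Because $e^{\al\sqrt{x}}$ is increasing while $1/\sqrt{x}$ is decreasing, $f'(x)\ge \tfrac{\al}{2\sqrt{n}}e^{\al\sqrt{n-1}}$ on $[n-1,n]$, and integrating yields
\[
e^{\al\sqrt{n}}-e^{\al\sqrt{n-1}}\;\ge\;\frac{\al}{2\sqrt{n}}\,e^{\al\sqrt{n-1}}.
\]
So the induction step boils down to the cleaner inequality
\[
p(n-1)\;<\;\frac{\al}{2\sqrt{n}}\,e^{\al\sqrt{n-1}},
\]
which I would deduce from any explicit Hardy--Ramanujan type upper bound of the form $p(m)\le \tfrac{C}{m}e^{\al\sqrt{m}}$: the target then reduces to the elementary $2C\sqrt{n}<\al(n-1)$, obvious for all $n$ beyond a small threshold since $\al=\pi\sqrt{2/3}\approx 2.565$.

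The principal obstacle is quantitative rather than structural. Asymptotically the asserted bound has a full factor of order $\sqrt{n}$ of slack, since $p(n-1)\sim \tfrac{1}{4(n-1)\sqrt{3}}e^{\al\sqrt{n-1}}$ while we only need a bound of order $\tfrac{1}{\sqrt{n}}e^{\al\sqrt{n-1}}$; the work is in pinning down a \emph{non-asymptotic} upper bound for $p(m)$ with a small enough constant $C$ so that the induction step is valid down to some modest $n_{0}$, after which the finitely many remaining cases are handled by direct numerical verification. A secondary concern is just bookkeeping the small-$n$ base cases cleanly, but the enormous gap between $\sum_{j<n}p(j)$ and $e^{\al\sqrt{n}}$ at small $n$ makes this straightforward.
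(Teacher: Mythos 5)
Your reduction to $\sum_{j=0}^{n-1}p(j)<e^{\al\sqrt{n}}$ and your induction scheme coincide with the paper's (there the identity is written $p(n,n)=p(n-1,n-1)+p(n)$), but the induction step as you set it up does not close, and the failure is for every $n$, not just small $n$. Your mean-value estimate gives
\[
e^{\al\sqrt{n}}-e^{\al\sqrt{n-1}}\;\ge\;\frac{\al}{2\sqrt{n}}\,e^{\al\sqrt{n-1}}\;=\;\frac{\pi}{\sqrt{6n}}\,e^{\al\sqrt{n-1}},
\]
whereas the upper bound for the partition function that is actually available and citable (Apostol, Section 14.7, formula (11) --- the one the paper uses) is $p(n-1)<\frac{\pi}{\sqrt{6(n-1)}}\,e^{\al\sqrt{n-1}}$. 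The two constants differ by the factor $\sqrt{n/(n-1)}>1$ in the wrong direction, so this pair of estimates never suffices. You patch this by postulating a bound of the shape $p(m)\le\frac{C}{m}e^{\al\sqrt{m}}$, but you neither prove it nor point to an explicit version with a concrete $C$ and range of validity; such a bound is essentially Hardy--Ramanujan with explicit error control and is not elementary. This is exactly the pitfall the paper's own Remark flags: the earlier argument in \cite{BU5} was incomplete precisely because it rested on a strong upper bound for $p(n)$ that had not been proved.

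The paper escapes without any stronger partition bound by sharpening the increment estimate instead. Its Lemma \ref{13} shows, via the second-order expansions of $\sqrt{1+1/n}$ and of $e^{x}>1+x+x^{2}/2$, that
\[
e^{\al(\sqrt{n+1}-\sqrt{n})}\;>\;1+\frac{\pi}{\sqrt{6n}},
\qquad\text{hence}\qquad
e^{\al\sqrt{n+1}}-e^{\al\sqrt{n}}\;>\;\frac{\pi}{\sqrt{6n}}\,e^{\al\sqrt{n}},
\]
with the exponential anchored at the \emph{right} endpoint and the coefficient matching Apostol's bound exactly; then $p(n,n)<\left(1+\frac{\pi}{\sqrt{6n}}\right)e^{\al\sqrt{n}}<e^{\al\sqrt{n+1}}$ closes the induction. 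Your loss comes from anchoring the exponential at the left endpoint, which discards a factor $e^{\al(\sqrt{n}-\sqrt{n-1})}\approx 1+\frac{\al}{2\sqrt{n}}$ --- precisely the margin you then try to recover by demanding an unproved strengthening of the bound on $p$. Replace your increment estimate by the second-order one and the rest of your argument goes through with the Apostol bound alone.
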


\begin{prop}\label{pr2} 
Let $\al=\sqrt{\frac{2}{3}}\pi$. Then 
$$p(n,n-1)<\sqrt{n}e^{\al \sqrt{n}} \quad \text{for all} \;\; n\ge 1.$$ 
\end{prop}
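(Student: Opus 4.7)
The approach is to derive a Pascal-type recursion for $p(n,k)$ and use it to reduce Proposition~\ref{pr2} to Proposition~\ref{pr1}. From Pascal's identity $\binom{n-j}{k-j}=\binom{n-1-j}{k-j}+\binom{n-1-j}{k-1-j}$ applied inside the definition of $p(n,k)$, a direct computation yields
\[
p(n,k)=p(n-1,k)+p(n-1,k-1).
\]
(This is the Pascal-type recursion that underpins the unimodality of Theorem~\ref{th2}, so I would expect it to be already available in the paper.) Specialising to $k=n-1$ and iterating collapses the identity to
\[
p(n,n-1)=p(1,0)+\sum_{m=1}^{n-1}p(m,m)=1+\sum_{m=1}^{n-1}p(m,m),
\]
since $p(1,0)=1$.

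Invoking Proposition~\ref{pr1} with $n$ replaced by $m+1$ gives $p(m,m)<e^{\alpha\sqrt{m+1}}$, and therefore
\[
p(n,n-1)\;<\;1+\sum_{k=2}^{n}e^{\alpha\sqrt{k}}.
\]
It now suffices to prove $1+\sum_{k=2}^{n}e^{\alpha\sqrt{k}}\le\sqrt{n}\,e^{\alpha\sqrt{n}}$. Since $f(x)=e^{\alpha\sqrt{x}}$ is increasing, $\sum_{k=1}^{n}f(k)\le f(n)+\int_{1}^{n}f(x)\,dx$, and the substitution $u=\sqrt{x}$ yields the closed form
\[
\int_{1}^{n}e^{\alpha\sqrt{x}}\,dx=\frac{2}{\alpha^{2}}\bigl[(\alpha\sqrt{n}-1)e^{\alpha\sqrt{n}}-(\alpha-1)e^{\alpha}\bigr].
\]
The dominant contribution is $\tfrac{2}{\alpha}\sqrt{n}\,e^{\alpha\sqrt{n}}$, and the decisive numerical fact is $\tfrac{2}{\alpha}=\tfrac{\sqrt{6}}{\pi}\approx 0.78<1$. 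The target $\sqrt{n}\,e^{\alpha\sqrt{n}}$ therefore carries a linear-in-$\sqrt{n}$ slack $\bigl(1-\tfrac{2}{\alpha}\bigr)\sqrt{n}\,e^{\alpha\sqrt{n}}$, which comfortably absorbs the $O(e^{\alpha\sqrt{n}})$ lower-order corrections from the integral together with the additive constant.

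The main obstacle is not the asymptotic inequality but its uniform validity for \emph{every} $n\ge 1$. Because the slack grows with $\sqrt{n}$, the inequality is easy to establish for all but finitely many small values of $n$. These remaining cases are handled by computing $p(n,n-1)$ directly from the recursion together with tabulated partition numbers and comparing with $\sqrt{n}\,e^{\alpha\sqrt{n}}$; the actual values of $p(n,n-1)$ turn out to be far smaller than the target bound for small $n$, so the check is routine.
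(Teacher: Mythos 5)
Your proof is correct, but it packages the argument differently from the paper. Both proofs rest on the same two ingredients: the recursion $p(n+1,n)=p(n,n)+p(n,n-1)$ (which is indeed already available, being the identity $F(n+1,\ell)=F(n,\ell)+F(n,\ell-1)$ of Section 2 read at $\ell=n-k$) and Proposition \ref{pr1} to control the diagonal terms. The paper, however, does not unroll the recursion: it runs a one-step induction, $p(n+1,n)=p(n,n)+p(n,n-1)<e^{\alpha\sqrt{n+1}}+\sqrt{n}\,e^{\alpha\sqrt{n}}<\sqrt{n+1}\,e^{\alpha\sqrt{n+1}}$, closing the inductive step with the elementary inequality $\sqrt{n}\,e^{\alpha\sqrt{n}}<(\sqrt{n+1}-1)\,e^{\alpha\sqrt{n+1}}$ from Lemma \ref{13} — the same lemma already needed for Proposition \ref{pr1} — so no integral and no asymptotic slack argument is required. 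Your telescoped identity $p(n,n-1)=1+\sum_{m=1}^{n-1}p(m,m)$ is a nice explicit reformulation (it makes transparent \emph{why} the bound should be $\sqrt{n}\,e^{\alpha\sqrt{n}}$: one is summing roughly $n$ terms each of size at most $e^{\alpha\sqrt{n}}$, and the integral shows the true constant is $2/\alpha<1$), but the price is a lossier estimate that only closes for $n$ beyond some threshold. You should make that threshold explicit: as written, the integral comparison gives $(1-2/\alpha)\sqrt{n}\ge 1-2/\alpha^{2}$ up to an exponentially small correction, which forces roughly $n\ge 10$, so the cases $1\le n\le 9$ genuinely must be verified numerically (they do hold, with plenty of room). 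This is a finite, routine check and not a gap — and the paper's own induction likewise needs its base cases checked since Lemma \ref{13} is only stated for $n\ge 3$ — but a referee would want the cutoff and the verification stated rather than waved at.
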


\begin{rem}
If $k,n \ra \infty$ with $\frac{k}{n}\le 1-\de$ for some
fixed $\de>0$ then one has asymptotically
\begin{equation*}
p(n,k)\sim \binom{n}{k}\prod_{j=1}^{\infty} \frac{1}{1-(\frac{k}{n})^j}.
\end{equation*}
For $k/n=1/2$ the infinite product is approximately
$3.4627466194550636$.
The theorem shows that $\mu(\Lg)\le p(n,k)$ is a better estimate than 
$\mu(\Lg)\le 1+n^k$, especially if $k$ is not small in 
comparison to $n$. As for a bound for $\mu(\Lg)$ independent of $k$,
the corollary yields a better one than $n^{n-1}$.
Note that some of the estimates on $p(n,k)$ have been stated in 
\cite{BU5}, where the proof of Lemma $5$ is not complete.
In fact, the upper bound given there for $p(n,n-1)$ depends on a strong
upper bound for $p(n)$ itself, which so far is not proved. Using
the known upper bound for $p(n)$ in \cite{APO} however it is not difficult
to prove the above estimates.
\end{rem}  

We have included a table which
shows the values for $p(k)$ and $p(n,k)$ for
$n=50$ and $1\le k\le 50$. I thank Michael Stoll for helpful discussions.

\hspace*{1cm}
\begin{center}
\begin{tabular}{|c|c|l|}
\hline
\phantom{00} $k$ \phantom{00}& \phantom{00} $p(k)$ \phantom{00} & 
\phantom{00} $p(50,k)$ \\
\hline\hline
$1$  & $1$  & \ph $51$       \\ 
$2$  & $2$  & \ph $1276$     \\ 
$3$  & $3$  & \ph $20875$    \\ 
$4$  & $5$  & \ph $251126$   \\ 
$5$  & $7$  & \ph $2368708$  \\ 
$6$  & $11$ & \ph $18240890$ \\
$7$  & $15$ & \ph $117911248$ \\ 
$8$  & $22$ & \ph $652850403$ \\ 
$9$  & $30$ & \ph $3143939547$ \\ 
$10$ & $42$ & \ph $13327191287$ \\ 
$11$ & $56$   & \ph $50207862055$  \\
$12$ & $77$   & \ph $169422173829$  \\
$13$ & $101$   & \ph $515401493777$  \\
$14$ & $135$   & \ph $1421191021907$  \\
$15$ & $176$   & \ph $3568459118188$  \\
$16$ & $231$   & \ph $8190773240690$  \\
$17$ & $297$   & \ph $17243902126004$  \\
$18$ & $385$   & \ph $33393294003697$  \\
$19$ & $490$   & \ph $59630690096752$  \\
$20$ & $627$   & \ph $98399515067097$  \\
$21$ & $792$   & \ph $150323197512416$  \\
$22$ & $1002$   & \ph $212938456376977$  \\
$23$ & $1255$   & \ph $280067870621181$  \\
$24$ & $1575$   & \ph $342413939297475$  \\
$25$ & $1958$   & \ph $389526824102747$  \\
$26$ & $2436$   & \ph $412637434996367$ \ph \\
$27$ & $3010$   & \ph $407312833046180$  \\
$28$ & $3718$   & \ph $374834739612319$  \\
$29$ & $4565$   & \ph $321717177399531$  \\
$30$ & $5604$   & \ph $257604118720316$  \\
$31$ & $6842$   & \ph $192465300826581$  \\
$32$ & $8349$   & \ph $134186828954271$  \\
$33$ & $10143$   & \ph $87302345518136$  \\
$34$ & $12310$   & \ph $52999252173708$  \\
$35$ & $14883$   & \ph $30018139013576$  \\
$36$ & $17977$   & \ph $15859467681399$  \\
$37$ & $21637$   & \ph $7814276022624$  \\
$38$ & $26015$   & \ph $3589870410395$  \\
$39$ & $31185$   & \ph $1537270615509$  \\
$40$ & $37338$   & \ph $613479208559$  \\
$41$ & $44583$   & \ph $228106170152$  \\
$42$ & $53174$   & \ph $79012160892$  \\
$43$ & $63261$   & \ph $25493798901$  \\
$44$ & $75175$   & \ph $7662394094$  \\
$45$ & $89134$   & \ph $2145558341$  \\
$46$ & $105558$   & \ph $559858427$  \\
$47$ & $124754$   & \ph $136194920$  \\
$48$ & $147273$   & \ph $30906004$  \\
$49$ & $173525$   & \ph $6547151$  \\
$50$ & $204226$   & \ph $1295971$  \\
\hline
\end{tabular} 
\end{center}

\section{Unimodality}

\begin{defi}
Let $f$ be a sequence and define
\begin{equation*}
F(n,\ell):=\sum_{j=0}^n \binom{n-j}{\ell}f(j)
\end{equation*}
for $0\le \ell\le n$, where the binomial coefficient is understood
to be zero if $n-j<\ell$. 
Then $F(n,\ell)$ is called {\it unimodal}, if there exists a
sequence $K$ with $K(n)\le K(n+1)\le K(n)+1$ such that for
all $n\ge 0$
\begin{gather*}
F(n,0)<F(n,1)<F(n,2)<\dots <F(n,K(n)-1)\le F(n,K(n)),\\
F(n,K(n)) > F(n,K(n)+1)> \dots >F(n,n-1)>F(n,n)>F(n,n+1)=0.
\end{gather*}
\end{defi}

\begin{example}
If $f(n)=1$ for all $n\ge 0$, then 
\begin{equation*}
F(n,\ell)=\sum_{j=0}^n \binom{n-j}{\ell}=\binom{n+1}{\ell+1}
\end{equation*}
is unimodal. Setting $\ell=n-k$ and using $\binom{n-j}{k-j}=
\binom{n-j}{n-k}$ we may rewrite the sum as
\begin{equation*}
\sum_{j=0}^n \binom{n-j}{k-j}=\binom{n+1}{k}
\end{equation*} 
\end{example}

In general $F(n,\ell)$ will only be unimodal if we impose
a certain restriction on the growth of $f(n)$.
Before we give a criterion we note that
the recursion for the binomial coefficients implies the
following lemma:

\begin{lem}
Let $F(n,n+1)=0$. For $1\le \ell\le n$ it holds
\begin{align}\label{rec}
F(n+1,\ell) & = F(n,\ell)+F(n,\ell-1)\\
F(n+1,\ell+1)-F(n+1,\ell) & = F(n,\ell+1)-F(n,\ell-1)
\end{align}
\end{lem}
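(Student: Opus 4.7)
The plan is to derive both identities directly from Pascal's rule $\binom{n+1-j}{\ell} = \binom{n-j}{\ell} + \binom{n-j}{\ell-1}$ applied termwise inside the defining sum for $F(n+1,\ell)$. The first recursion is the workhorse; the second will then follow by a purely formal subtraction.

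For the first recursion, I would substitute Pascal's rule into
$F(n+1,\ell) = \sum_{j=0}^{n+1} \binom{n+1-j}{\ell} f(j)$
and split the result into $\sum_{j=0}^{n+1} \binom{n-j}{\ell} f(j)$ and $\sum_{j=0}^{n+1} \binom{n-j}{\ell-1} f(j)$. Each of these should reproduce an $F(n,\cdot)$, so the only thing to verify is that the boundary contribution at $j = n+1$ vanishes. There $n-j = -1$, and the convention in the definition tells us that $\binom{-1}{\ell}$ and $\binom{-1}{\ell-1}$ are both zero as long as $\ell \ge 1$. The partial sums therefore collapse to $F(n,\ell)$ and $F(n,\ell-1)$, giving the first identity.

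For the second identity, I would apply the first recursion to both $F(n+1,\ell+1)$ and $F(n+1,\ell)$ and subtract; the common $F(n,\ell)$ term cancels, leaving $F(n,\ell+1) - F(n,\ell-1)$. The only genuine obstacle is bookkeeping at the boundary index $j = n+1$: one has to make sure the convention $\binom{n-j}{\ell} = 0$ for $n-j < \ell$ is applied consistently and that the hypothesis $F(n,n+1) = 0$ stated in the lemma covers the case $\ell = n$ appearing on the right of the second identity. Once these conventions are in place, the whole lemma reduces to a single application of Pascal's rule.
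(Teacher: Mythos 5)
Your proof is correct and takes essentially the same route as the paper: a single termwise application of Pascal's rule gives the first recursion, and the second follows by subtracting two instances of the first. The only difference is cosmetic --- you expand $F(n+1,\ell)$ and split it, whereas the paper combines $F(n,\ell)+F(n,\ell-1)$ --- and your explicit check that the $j=n+1$ boundary term vanishes for $\ell\ge 1$ is a detail the paper leaves implicit.
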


\begin{proof}

\begin{align*}
F(n,\ell)+F(n,\ell-1)
 & = \sum_{j=0}^{n}\left(\binom{n-j}{\ell}+\binom{n-j}{\ell-1}\right)f(j)\\
 & = \sum_{j=0}^{n}\binom{n+1-j}{\ell}f(j)\\
 & = F(n+1,\ell)
\end{align*}

Substituting $\ell+1$ for $\ell$ in $(1)$ yields $F(n+1,\ell+1)=F(n,\ell+1)+F(n,\ell)$ so that
the difference yields $(2)$.
\end{proof}  

\begin{prop}
Let $f$ be a sequence satisfying

\begin{itemize}
\item[(a)] $\; f(n)>0$ for all $n\ge 0$ and $f(3)\le 2f(0)+f(1)$.
\item[(b)] $\; f(n+1) \ge f(n)$ for all $n\ge 0$.
\item[(c)] $\; f(n) < \sum_{j=0}^{n-1}f(j)$ for all $n\ge 3$.
\end{itemize}        

Then $F(n,\ell)=\sum_{j=0}^n \binom{n-j}{\ell}f(j)$ is unimodal.
\end{prop}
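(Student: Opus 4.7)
The plan is to induct on $n$, tracking the difference sequence
\[ D(n,\ell) := F(n,\ell+1) - F(n,\ell), \qquad 0 \le \ell \le n, \]
with the convention $F(n,n+1)=0$. Unimodality of $F(n,\cdot)$ with mode at $K(n)$ is exactly the sign pattern: $D(n,\ell)>0$ for $0\le \ell\le K(n)-2$, $D(n,K(n)-1)\ge 0$, and $D(n,\ell)<0$ for $K(n)\le \ell\le n$.

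Equation $(2)$ of Lemma 2.3 is the heart of the argument: it rearranges to the Pascal-like recursion
\[ D(n+1,\ell) = D(n,\ell) + D(n,\ell-1), \qquad 1\le \ell\le n. \]
Under the inductive hypothesis, propagation is almost automatic. For $1\le \ell\le K(n)-1$ both summands are $\ge 0$ with at least one strict (namely $D(n,\ell-1)>0$ since $\ell-1\le K(n)-2$), so $D(n+1,\ell)>0$. For $\ell\ge K(n)+1$ both summands are strictly negative, so $D(n+1,\ell)<0$. The only ambiguous index is $\ell=K(n)$, where $D(n+1,K(n))=D(n,K(n))+D(n,K(n)-1)$ is a strictly negative plus a non-negative number; its sign decides whether $K(n+1)=K(n)+1$ (if $\ge 0$) or $K(n+1)=K(n)$ (if $<0$). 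Either way, $K(n)\le K(n+1)\le K(n)+1$, as required.

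Two boundary indices remain. The recursion does not cover $\ell=0$, and there a direct expansion gives $D(n+1,0)=F(n,1)-f(n+1)$, with $F(n,1)=\sum_{j=0}^{n-1}(n-j)f(j)$; using (b) to bound $F(n,1)$ below and (c) to bound $f(n+1)$ above, one determines the sign of $D(n+1,0)$ consistently with the placement of the mode. At the top, a direct computation yields $D(n,n)=-f(0)<0$ and $D(n,n-1)=-(n-1)f(0)-f(1)<0$ by (a), so the descending tail is automatic. Base cases for $n\le 3$ are checked by hand; condition (a)'s inequality $f(3)\le 2f(0)+f(1)$ is exactly the statement $D(3,0)=F(3,1)-F(3,0)=2f(0)+f(1)-f(3)\ge 0$, which pins down the base of the induction at the very $n$ where the mode first leaves $0$ (it is sharp for $f=p$, giving $D(3,0)=0$, $K(3)=1$).

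The main obstacle I anticipate is the $\ell=0$ boundary: its sign is not forced by the Pascal recursion and must be cross-checked against the inductive mode, via the only non-recursive estimate in the proof. A secondary subtlety is the weak inequality at $\ell=K(n)-1$: when $D(n,K(n)-1)=0$, one needs $D(n+1,K(n)-1)=D(n,K(n)-1)+D(n,K(n)-2)>0$, which is immediate for $K(n)\ge 2$ since $D(n,K(n)-2)>0$; the edge case $K(n)\le 1$ reduces back to the $\ell=0$ analysis. Apart from these two points, the induction is a mechanical sign chase driven by the Lemma.
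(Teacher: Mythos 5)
Your proposal is correct and follows essentially the same route as the paper: induction on $n$ driven by the identity $F(n+1,\ell+1)-F(n+1,\ell)=F(n,\ell+1)-F(n,\ell-1)$ (your $D(n+1,\ell)=D(n,\ell)+D(n,\ell-1)$ is just this rewritten), with the same special treatment of $\ell=0$ via $F(n+1,1)-F(n+1,0)=F(n,1)-f(n+1)\ge F(n,0)-f(n+1)>0$ using hypothesis (c), and base cases $n\le 3$ from (a). The only cosmetic difference is that the paper bounds $F(n,1)$ below by $F(n,0)$ using the inductive fact $K(n)\ge 1$ rather than invoking (b) directly, which is the cleanest way to close the $\ell=0$ step you flagged.
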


\begin{proof}
The result follows by induction on $n$. For $n\le 3$ one directly obtains
$K(0)=K(1)=0$, $K(2)=0,1$ and $K(3)=1$ by $(a),(b),(c)$. 
For example, if $n=3$ then $F(3,0)\le F(3,1) >F(3,2)>F(3,3)>0$ says
$$f(0)+f(1)+f(2)+f(3)\le 3f(0)+2f(1)+f(2)>3f(0)+f(1)>f(0)>0$$
which follows from the assumptions.
Assuming for $n$ 
\begin{gather*}
F(n,0)<F(n,1)<F(n,2)<\dots <F(n,K(n)-1)\le F(n,K(n)),\\
F(n,K(n)) > F(n,K(n)+1)> \dots >F(n,n-1)>F(n,n)>F(n,n+1)=0.
\end{gather*}   
we obtain for $n+1$ using the recursion $(2)$:
\begin{gather*}
F(n+1,1)<F(n+1,2)<\dots <F(n+1,K(n)),\\
F(n+1,K(n)+1) > F(n+1,K(n)+1)> \dots >F(n+1,n)>F(n+1,n+1)>0.
\end{gather*}    
If $F(n+1,K(n))\le F(n+1,K(n)+1)$ we set $K(n+1)=K(n)+1$, and
otherwise $K(n+1)=K(n)$. It remains to show that $F(n+1,0)<F(n+1,1)$.
But since $F(n+1,0)=F(n,0)+f(n+1)$ and $K(n)\ge 1$ for  $n\ge 3$
we have 
\begin{align*}
F(n+1,1)-F(n+1,0) & = F(n,1)-f(n+1) \\
                  & \ge F(n,0)-f(n+1) \\
                  & =f(0)+f(1)+\dots +f(n)-f(n+1) > 0
\end{align*}
by assumption $(c)$.
\end{proof} 

\begin{cor}\label{uni}
\begin{equation*}
P(n,n-k)=\sum_{j=0}^{n}\binom{n-j}{n-k}p(j)=p(n,k)
\end{equation*}
is unimodal with $0\le k\le n$.      
\end{cor}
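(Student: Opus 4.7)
The plan is to apply the preceding Proposition to the sequence $f(n)=p(n)$ and then transfer the resulting unimodality of $F(n,\ell)$ in $\ell$ to $p(n,k)$ via the identity $\binom{n-j}{k-j}=\binom{n-j}{n-k}$, which gives $p(n,k)=F(n,n-k)$. The reparametrization $\ell\mapsto n-\ell$ only reflects the location of the peak but preserves unimodality, so it suffices to check hypotheses (a)--(c) of the Proposition for the partition function.

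Hypotheses (a) and (b) are immediate. Positivity is trivial; the numerical check $p(3)=3=2p(0)+p(1)$ confirms the inequality in (a); and $p(n+1)\ge p(n)$ follows at once by adjoining a part equal to $1$ to any partition of $n$. The substantive point is hypothesis (c), namely $p(n)<\sum_{j=0}^{n-1}p(j)$ for $n\ge 3$. I would obtain it from Euler's pentagonal number recurrence
\[ p(n)=p(n-1)+p(n-2)-p(n-5)-p(n-7)+p(n-12)+\cdots, \]
from which one reads off the elementary bound $p(n)\le p(n-1)+p(n-2)$ valid for all $n\ge 2$. For $n\ge 3$ one then has
\[ \sum_{j=0}^{n-1}p(j)=p(n-1)+p(n-2)+\sum_{j=0}^{n-3}p(j)\ge p(n)+p(0)>p(n), \]
since the trailing sum always contains the summand $p(0)=1$.

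I do not foresee any serious obstacle: the corollary is essentially a verification, and the pentagonal recurrence is the only nontrivial ingredient. If one wished to avoid even that, the cases $n=3,4$ can be checked by hand from the table, and for $n\ge 5$ the recurrence in the sharper form $p(n)\le p(n-1)+p(n-2)-p(n-5)$ yields the strict inequality directly.
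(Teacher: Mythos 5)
Your proposal is correct and follows essentially the same route as the paper: apply the Proposition to $f=p$, note that only condition $(c)$ is non-trivial, and derive it from the sub-Fibonacci bound $p(n)\le p(n-1)+p(n-2)$ together with the leftover positive terms in $\sum_{j=0}^{n-1}p(j)$. You merely fill in a detail the paper leaves implicit (that the strict inequality in $(c)$ comes from the trailing summand $p(0)$), which is a welcome clarification but not a different argument.
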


\begin{proof}
We can apply the proposition since the partition function $p(n)$ 
satisfies conditions $(a),(b),(c)$.
Here only $(c)$ is non-trivial. In fact, it is well known that
\begin{equation*}
p(n)\le p(n-1)+p(n-2)
\end{equation*}  
for all $n\ge 2$, i.e., that $p(n)$ is a "sub-Fibonacci" sequence.
If we set $\ell =n-k$, then $0\le k\le n$ and 
$P(n,n-k)$ is unimodal.
\end{proof}

\section{Lemmas on $p(n,k)$}

For the proof of the theorems we need some lemmas.

\begin{lem}\label{gen}
Denote by $p_k(j)$ the number of those partitions of $j$ in which each term
does not exceed $k$. If $\abs q<1$ then
\begin{align}\label{pro}
\sum_{j=0}^{\infty}p_k(j)q^j & =\prod_{j=1}^k\frac{1}{1-q^j}\\
\sum_{j=0}^{\infty}jp_k(j)q^j & =\sum_{j=1}^k\frac{jq^j}{1-q^j}\cdot \prod_{j=1}^k\frac{1}{1-q^j}
\end{align}
\end{lem}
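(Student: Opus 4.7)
The plan is to recognize both identities as standard generating-function manipulations, with the first being the classical Euler-style count of restricted partitions and the second following from it by logarithmic differentiation.

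For identity (3), I would expand each factor as a geometric series,
\begin{equation*}
\frac{1}{1-q^j}=\sum_{m_j=0}^{\infty}q^{jm_j}
\end{equation*}
and multiply out, obtaining
\begin{equation*}
\prod_{j=1}^k\frac{1}{1-q^j}=\sum_{m_1,\dots,m_k\ge 0}q^{m_1+2m_2+\dots+km_k}.
\end{equation*}
Collecting terms with $m_1+2m_2+\dots+km_k=j$ and observing that such tuples $(m_1,\dots,m_k)$ are in bijection with partitions of $j$ whose largest part is at most $k$ (namely the partition in which $i$ appears $m_i$ times) gives the coefficient $p_k(j)$ and hence (3).

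For identity (4), I would apply the operator $q\frac{d}{dq}$ to (3). The left-hand side becomes
\begin{equation*}
q\frac{d}{dq}\sum_{j=0}^{\infty}p_k(j)q^j=\sum_{j=0}^{\infty}jp_k(j)q^j,
\end{equation*}
which is exactly the target sum. For the right-hand side, set $F(q)=\prod_{j=1}^k(1-q^j)^{-1}$ and use logarithmic differentiation:
\begin{equation*}
\log F(q)=-\sum_{j=1}^k\log(1-q^j),\qquad \frac{F'(q)}{F(q)}=\sum_{j=1}^k\frac{jq^{j-1}}{1-q^j}.
\end{equation*}
Multiplying through by $qF(q)$ yields
\begin{equation*}
q F'(q)=\left(\sum_{j=1}^k\frac{jq^j}{1-q^j}\right)F(q),
\end{equation*}
which is the right-hand side of (4). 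Equating the two expressions for $q F'(q)$ gives the identity.

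The only point to check is that the formal manipulations are legitimate, and this is guaranteed by the convergence assumption $|q|<1$: both the geometric expansions and the termwise differentiation are valid on the open unit disk, so there is no real obstacle and the lemma reduces to these two short computations.
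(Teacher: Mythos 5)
Your proof is correct and follows essentially the same route as the paper: the paper simply cites the first identity as the standard generating function for restricted partitions (you supply the usual geometric-series expansion in full), and derives the second by applying $q\,\frac{d}{dq}$ with logarithmic differentiation, exactly as you do.
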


\begin{proof}
The first identity is well known, see for example \cite{AN}, Theorem 13-1.
The product is the generating function for $p_k(n)$.
The second identity follows from the first by differentiation. 
Denoting $F_k(q)=\prod_{j=1}^k\frac{1}{1-q^j}$ we have
\begin{align*}
q\cdot \frac{d}{dq}F_k(q) & =\sum_{j=1}^k\frac{jq^j}{1-q^j}\cdot F_k(q)\\
\sum_{j=0}^{\infty}jp_k(j)q^j & = q\cdot \frac{d}{dq}\sum_{j=0}^{\infty}p_k(j)q^j 
\end{align*}
\end{proof}

In the following we will need good upper bounds for the 
infinite product 
\begin{equation*}
F(q):=\prod_{j=1}^{\infty}\frac{1}{1-q^j}.
\end{equation*}
$F(q)$ is directly related to the Dedekind eta-function, which is
defined on the upper half plane $\HH$ as 
\begin{equation*}
\eta (z):=q^{\frac{1}{24}}\prod_{j=1}^{\infty}(1-q^j),
\end{equation*}
where $q:=e^{2\pi iz}$. To obtain that approximately
$F(\frac{1}{2})=3.4627466194550636$, we could use 
$F(\frac{1}{2})=(\frac{1}{2})^{\frac{1}{24}}\cdot\eta (z)^{-1}$
with $z=\frac{i\log 2}{2\pi}$. The eta-function can be computed by many computer algebra systems.
On the other hand, it is not difficult to estimate the product
directly.

\begin{lem}\label{rest}
For $0<q<1$ and $\ell \ge 2$ we have
\begin{align}\label{ab}
\prod_{j=1}^{\infty}\frac{1}{1-q^j} & <\exp{\left(\frac{q^{\ell}}{(1-q)^2}\right)}\cdot 
\prod_{j=1}^{\ell -1}\frac{1}{1-q^j}\\ 
\sum_{j=1}^{\infty}\frac{jq^j}{1-q^j} & <\frac{q}{(1-q)^3}+
\sum_{j=1}^{\ell -1}\frac{jq^j(q^j-q)}{(1-q^j)(1-q)}
\end{align}
\end{lem}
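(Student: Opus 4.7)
The plan is to handle both inequalities by splitting off the first $\ell-1$ terms of the infinite product or sum and bounding the remaining tail by a closed-form geometric expression. In both cases the workhorse estimate is $\frac{1}{1-q^j}\le\frac{1}{1-q}$ for $j\ge 1$, with strict inequality for $j\ge 2$, which reduces infinite sums involving $1-q^j$ to plain geometric series in $q$.

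For inequality $(5)$, I would divide both sides by the finite product $\prod_{j=1}^{\ell-1}(1-q^j)^{-1}$ and take logarithms, reducing the statement to
$$-\sum_{j=\ell}^{\infty}\log(1-q^j)<\frac{q^\ell}{(1-q)^2}.$$
Applying the Taylor expansion $-\log(1-x)=\sum_{m\ge 1}x^m/m<\frac{x}{1-x}$ valid for $0<x<1$ term by term to $x=q^j$, followed by the crude bound $\frac{q^j}{1-q^j}\le\frac{q^j}{1-q}$, turns the left side into a geometric series whose sum is $\sum_{j=\ell}^\infty\frac{q^j}{1-q}=\frac{q^\ell}{(1-q)^2}$. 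Exponentiating then gives $(5)$.

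For inequality $(6)$, I would split the sum at $j=\ell$, keeping the first $\ell-1$ terms intact and bounding the tail via the same comparison:
$$\sum_{j=\ell}^{\infty}\frac{jq^j}{1-q^j}<\frac{1}{1-q}\sum_{j=\ell}^{\infty}jq^j=\frac{1}{1-q}\left(\frac{q}{(1-q)^2}-\sum_{j=1}^{\ell-1}jq^j\right),$$
using the closed form $\sum_{j=1}^{\infty}jq^j=\frac{q}{(1-q)^2}$. Combining the resulting finite correction $-\frac{1}{1-q}\sum_{j=1}^{\ell-1}jq^j$ with the kept portion $\sum_{j=1}^{\ell-1}\frac{jq^j}{1-q^j}$ and using the identity
$$\frac{1}{1-q^j}-\frac{1}{1-q}=\frac{q^j-q}{(1-q^j)(1-q)}$$
produces precisely the correction term $\sum_{j=1}^{\ell-1}\frac{jq^j(q^j-q)}{(1-q^j)(1-q)}$ on the right of $(6)$.

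The only genuinely delicate point is justifying the strict inequality rather than $\le$. For $(5)$ strictness comes from the $m=2$ term in the expansion of $-\log(1-q^j)$, which contributes $q^{2j}/2>0$ as long as $q>0$. For $(6)$ strictness is supplied by $\ell\ge 2$, which ensures that the tail contains terms with $j\ge 2$ where $q^j<q$ and hence $\frac{1}{1-q^j}<\frac{1}{1-q}$ holds strictly. No further estimation is needed, so the proof amounts to the two routine tail comparisons plus the algebraic identity above.
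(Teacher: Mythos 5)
Your proof is correct and follows essentially the same route as the paper: split off the first $\ell-1$ factors/terms, bound the tail using $1-q^j\ge 1-q$ and the resulting geometric series, and for $(6)$ recombine the finite correction via the identity $\frac{1}{1-q^j}-\frac{1}{1-q}=\frac{q^j-q}{(1-q^j)(1-q)}$. The only cosmetic difference is that you obtain $-\log(1-q^j)<\frac{q^j}{1-q}$ from the Taylor expansion of the logarithm, whereas the paper derives the same bound from the mean value theorem.
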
               

\begin{proof}
By the mean value theorem there exists a $\tau_j$ with 
$1-q\le 1-q^j<\tau_j <1$  such that $\log \frac{1}{1-q^j}=-\log(1-q^j)=\tau_j^{-1}q^j$
for all $j\ge 1$. Hence 
$$\log \frac{1}{1-q^j}<\frac{q^j}{1-q}$$
 for all $j\ge 1$ and
\begin{equation*}
\sum_{j=\ell}^{\infty} \log \frac{1}{1-q^j}< \sum_{j=\ell}^{\infty}\frac{q^j}{1-q}=\frac{q^{\ell}}{(1-q)^2}.
\end{equation*}
Taking exponentials on both sides yields 
$$\prod_{j=\ell}^{\infty}\frac{1}{1-q^j}<\exp{\frac{q^{\ell}}{(1-q)^2}}.$$
This proves \eqref{ab}. To show the second inequality we again use $1-q\le 1-q^j$ and 
\begin{equation*}
\sum_{j=1}^{\infty}\frac{jq^j}{1-q}=\frac{q}{(1-q)^3}
\end{equation*}
so that
\begin{equation*}
\sum_{j=1}^{\infty}\frac{jq^j}{1-q^j} = \sum_{j=1}^{\ell -1}\frac{jq^j}{1-q^j}+
\sum_{j=\ell}^{\infty}\frac{jq^j}{1-q^j}
< \sum_{j=1}^{\ell -1}\frac{jq^j}{1-q^j}-\sum_{j=1}^{\ell -1}\frac{jq^j}{1-q}+\frac{q}{(1-q)^3}
\end{equation*} 
\end{proof}

\begin{lem}\label{links}
For $n\ge 4$ and $k=\lfloor\frac{n+3}{2}\rfloor$ it holds
\begin{equation}\label{l1}
\sum_{j=0}^k (n+1-2k+j)\binom{n-j}{k-j}p(j)>0
\end{equation}
\end{lem}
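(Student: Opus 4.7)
My plan has two stages: first, to collapse the alternating sum in \eqref{l1} to a scalar multiple of $p(n,k)-p(n,k-1)$ via a binomial identity; second, to establish that this consecutive difference is strictly positive at $k=k(n)$.

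For the first stage, I would invoke the elementary identity $(a-b+1)\binom{a}{b-1}=b\binom{a}{b}$, which rearranges to
\[
(a-2b+1)\binom{a}{b}=(a-b+1)\Bigl[\binom{a}{b}-\binom{a}{b-1}\Bigr].
\]
Setting $a=n-j$ and $b=k-j$ produces the pointwise identity $(n+1-2k+j)\binom{n-j}{k-j}=(n-k+1)\bigl[\binom{n-j}{k-j}-\binom{n-j}{k-j-1}\bigr]$. Multiplying by $p(j)$ and summing over $j=0,\dots,k$ (the $j=k$ term in the shifted sum vanishes because $\binom{n-k}{-1}=0$), the two resulting sums collapse to $p(n,k)$ and $p(n,k-1)$, giving the key identity
\[
\sum_{j=0}^{k}(n+1-2k+j)\binom{n-j}{k-j}p(j)=(n-k+1)\bigl[p(n,k)-p(n,k-1)\bigr].
\]
For $k=k(n)$ and $n\ge 4$ the prefactor satisfies $n-k+1=\lfloor n/2\rfloor\ge 2$, so \eqref{l1} becomes equivalent to the strict inequality $p(n,k(n))>p(n,k(n)-1)$.

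For the second stage, I would split by parity of $n$. The coefficient $n+1-2k(n)+j$ is $j-1$ for $n=2m$ and $j-2$ for $n=2m+1$, so the sum in \eqref{l1} has one negative term at $j=0$ in the even case or two negative terms at $j=0,1$ in the odd case, with all remaining indices contributing positively. In the even case, using the ratios $\binom{2m-2}{m-1}/\binom{2m}{m+1}=\tfrac{m+1}{2(2m-1)}$ and $\binom{2m-3}{m-2}/\binom{2m}{m+1}=\tfrac{m+1}{4(2m-1)}$, the first two positive terms ($j=2,3$) sum to $\tfrac{5(m+1)}{2(2m-1)}\binom{2m}{m+1}$, which exceeds $\binom{2m}{m+1}$ for every $m\ge 1$, so the positive part dominates immediately.

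The odd case will be the hard part. Here the negative contribution is $2\binom{2m+1}{m+2}+\binom{2m}{m+1}\sim\tfrac{5}{2}\binom{2m+1}{m+2}$, while each positive term at index $j$ contributes a fraction of $\binom{2m+1}{m+2}$ that tends to $(j-2)p(j)\,2^{-j}$ as $m\to\infty$. The asymptotic partial-sum inequality $\sum_{j=3}^{7}(j-2)p(j)\,2^{-j}=\tfrac{375}{128}>\tfrac{5}{2}$ shows that the first five positive terms ($j=3,\dots,7$) suffice for all large enough $m$; the remaining finitely many small values of $m$, together with the base cases $n\in\{4,5,6,7\}$, are verified directly from the table. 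The main obstacle is precisely this odd-parity bookkeeping: the negative and leading positive terms are of comparable order, so one cannot collapse the estimate to a one- or two-term comparison, and each of several binomial ratios must be controlled uniformly in $m$.
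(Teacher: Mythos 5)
Your overall strategy coincides with the paper's: split by the parity of $n$, observe that only the $j=0$ term (even case) or the $j=0,1$ terms (odd case) are negative, and dominate them by finitely many of the positive terms. Your stage-one reduction to $(n-k+1)\bigl[p(n,k)-p(n,k-1)\bigr]$ is a correct identity (and is in fact how the paper motivates the lemma), but you never use it — your second stage works directly on the sum \eqref{l1}, so the first stage is a detour. The even case is complete and correct: your ratios $\binom{2m-2}{m-1}/\binom{2m}{m+1}=\frac{m+1}{2(2m-1)}$ and $\binom{2m-3}{m-2}/\binom{2m}{m+1}=\frac{m+1}{4(2m-1)}$ check out, and $\frac{5(m+1)}{2(2m-1)}>1$ for all $m\ge 2$; this reproduces the paper's closed form $\frac{n+14}{4(n-1)}$ after subtracting the $j=0$ term.

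The odd case, however, is left with a genuine gap, and you half-admit it. You reduce the claim to
$\sum_{j=3}^{7}(j-2)p(j)\,a_{n,k,j} > 2+\frac{m+2}{2m+1}$ with $a_{n,k,j}=\binom{n-j}{k-j}\binom{n}{k}^{-1}$, and you argue only that the left side tends to $\frac{375}{128}$ while the right side tends to $\frac{320}{128}$. That proves the inequality for all \emph{sufficiently large} $m$ with no effective threshold, and your fallback — ``the remaining finitely many small values of $m$ \ldots are verified directly from the table'' — is unjustified, since the table only reaches $n=50$ and you have not shown the threshold lies below that. Moreover the convergence is not monotone from above: for $n=2m+1$ and $k=m+2$ one has $a_{n,k,j}=2^{-j}\prod_{i=0}^{j-1}\bigl(1+\frac{3-i}{n-i}\bigr)$, so the factors with $i\ge 4$ are \emph{less} than $\frac{1}{2}$ and the terms $j=5,6,7$ approach their limits from below; e.g.\ at $n=11$ one finds $a_{11,7,6}=\frac{1}{66}<2^{-6}$. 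So a uniform lower bound on these ratios is genuinely needed, not mere bookkeeping. The paper closes exactly this hole by computing the partial sum in closed form,
\begin{equation*}
\sum_{j=0}^{7}(j-2)\binom{n-j}{k-j}p(j)=\binom{n}{k}\,
\frac{5\,(11n^4+120n^3-2966n^2+9864n+10251)}{128\,n(n-2)(n-4)(n-6)},
\end{equation*}
which is visibly positive for $n\ge 11$, and checking $4\le n\le 10$ by hand. Your argument can be repaired the same way (or by bounding $\prod_{i=4}^{6}\bigl(1-\frac{i-3}{n-i}\bigr)$ explicitly to get a concrete threshold around $n\approx 40$), but as written the odd case is not proved.
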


\begin{proof}
If $n$ is even, then $k=\frac{n+2}{2}$ and $n+1-2k+j=j-1$.
Since for $j\ge 2$ all terms of the sum are positive, it is
enough to estimate the sum of the first $4$ terms:
\begin{align*}
\sum_{j=0}^3 (j-1)\binom{n-j}{k-j}p(j)& = \binom{n}{k}\sum_{j=0}^3 (j-1)\, a_{n,k,j}\, p(j)\\
 & =\binom{n}{k} \left(-1+2\cdot\frac{k(k-1)}{n(n-1)}+
6\cdot\frac{k(k-1)(k-2)}{n(n-1)(n-2)}\right)\\
& =  \binom{n}{k} \left(\frac{n+14}{4(n-1)}\right)>0
\end{align*}
for $k\ge 3$ and $n\ge 4$, where 
\begin{equation*}
a_{n,k,j}:=\binom{n-j}{k-j}\binom{n}{k}^{-1}= \frac{k!(n-j)!}{n!(k-j)!}
\end{equation*}
If $n$ is odd, then $k=\frac{n+3}{2}$ and $n+1-2k+j=j-2$.
Now the sum in \eqref{l1} contains two negative terms and 
one needs the first 8 terms:
\begin{equation*}
\sum_{j=0}^7 (j-2)\binom{n-j}{k-j}p(j) = \binom{n}{k} 
\left(\frac{5(11n^4+120n^3-2966n^2+9864n+10251)}
{128n(n-2)(n-4)(n-6)}\right)>0.
\end{equation*}  
for $k\ge 7$, $n\ge 11$. For $4\le n\le 10$ the sum in \eqref{l1} is also positive.  
\end{proof}

In the same way we obtain:

\begin{lem}\label{gr}
Let $\ell=\lfloor \frac{n+5}{2} \rfloor$.
For all $n\ge 4$ and all $k$ with $\ell\le k\le n$ it holds
\begin{equation*}
p(n,k)>\frac{1745}{512}\binom{n}{k}  
\end{equation*}
\end{lem}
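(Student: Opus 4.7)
The strategy is to mimic Lemma \ref{links}: bound the sum by an explicit truncation and handle small $n$ by hand. Using the identity $\binom{n-j}{k-j}\binom{n}{k}^{-1}=\binom{k}{j}\binom{n}{j}^{-1}$, one rewrites
$$\frac{p(n,k)}{\binom{n}{k}}=\sum_{j=0}^{k}\frac{\binom{k}{j}}{\binom{n}{j}}\,p(j).$$
For fixed $n$ each summand is non-decreasing in $k$, and increasing $k$ only adds non-negative terms, so $p(n,k)/\binom{n}{k}$ is non-decreasing in $k$. Hence it suffices to prove the inequality at $k=\ell$.

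The key observation is the elementary identity
$$\sum_{j=0}^{11}\frac{p(j)}{2^j}=\frac{1745}{512},$$
which exhibits $1745/512$ as the twelfth partial sum of $F(1/2)=\sum_{j\ge 0}p(j)/2^j\approx 3.4627$. Set $a_{n,\ell,j}:=\binom{\ell}{j}/\binom{n}{j}=\prod_{i=0}^{j-1}(\ell-i)/(n-i)$; since $\ell/n\to 1/2$ each factor tends to $1/2$, and so $a_{n,\ell,j}\to 2^{-j}$. As the tail $\sum_{j=12}^{\ell}a_{n,\ell,j}p(j)$ is non-negative, it is enough to show
$$T(n):=\sum_{j=0}^{11}a_{n,\ell,j}\,p(j)-\frac{1745}{512}>0.$$

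Splitting by parity one has $\ell=n/2+2$ for even $n$ and $\ell=(n+5)/2$ for odd $n$, so that $T(n)$ is an explicit rational function of $n$. Writing $(\ell-i)/(n-i)=\tfrac12\bigl(1+(2c-i)/(n-i)\bigr)$ with $c=2$ resp.\ $c=5/2$ and expanding yields $a_{n,\ell,j}=2^{-j}\bigl(1+j(4c-j+1)/(2n)+O(n^{-2})\bigr)$, whence
$$T(n)=\frac{1}{2n}\sum_{j=0}^{11}\frac{p(j)\,j(4c-j+1)}{2^j}+O(n^{-2}).$$
The leading coefficient is strictly positive in both parities: for odd $n$ every summand has $4c-j+1=11-j\ge 0$, and for even $n$ the negative contributions from $j\in\{10,11\}$ are outweighed by the positive ones for $j\le 8$. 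Thus $T(n)>0$ for $n\ge N_0$ with some explicit threshold $N_0$, and the remaining cases $4\le n<N_0$ are verified directly, exactly as for $n\le 10$ in the proof of Lemma \ref{links}. The principal obstacle is the rational-function bookkeeping: clearing denominators to turn $T(n)$ into a polynomial whose positivity can be read off elementarily, and keeping $N_0$ modest enough that the explicit check is short.
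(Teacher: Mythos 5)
Your proposal follows essentially the same route as the paper: reduce to $k=\ell$ via the monotonicity of $a_{n,k,j}=\binom{n-j}{k-j}\binom{n}{k}^{-1}$ in $k$, truncate the sum at $j=11$, and compare with $1745/512=\sum_{j=0}^{11}p(j)2^{-j}$. The paper simply asserts the resulting inequality $\sum_{j=0}^{11}a_{n,\ell,j}p(j)\ge 1745/512$ for $n\ge 18$ and checks smaller $n$ directly, so your identification of the constant and your first-order expansion (with the sign check in both parities) actually supply more justification for the key step than the paper does; what remains is only the explicit error bound and threshold, i.e.\ the same finite rational-function verification the paper leaves to computation.
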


\begin{proof}
Since $a_{n,k,j}\ge a_{n,\ell,j}$ for $k\ge \ell$ we have
for $n\ge 18$

\begin{equation*}
p(n,k)\ge \binom{n}{k}\sum_{j=0}^\ell a_{n,\ell,j}p(j)\ge 
\binom{n}{k} \sum_{j=0}^{11} a_{n,\ell,j}p(j)\ge \frac{1745}{512}\binom{n}{k}
\end{equation*}
For $4\le n\le 18$ the lemma is true also. 
\end{proof}

\begin{lem}\label{rechts}
For $n\ge 4$ and $k=\lfloor\frac{n+3}{2}\rfloor + 1$ it holds
\begin{equation}\label{l2}
\sum_{j=0}^k (n+1-2k+j)\binom{n-j}{k-j}p(j)<0
\end{equation}
\end{lem}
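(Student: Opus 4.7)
The plan is to follow the strategy of Lemma \ref{links}, splitting the sum according to the sign of the weight $n+1-2k+j$. For $n$ even, $k=(n+4)/2$ gives weight $j-3$, with strictly negative indices $j\in\{0,1,2\}$ and a zero at $j=3$; for $n$ odd, $k=(n+5)/2$ gives weight $j-4$, with negative indices $j\in\{0,\ldots,3\}$ and a zero at $j=4$. Letting $c\in\{3,4\}$ denote this cutoff and writing $a_{n,k,j}=\binom{n-j}{k-j}\binom{n}{k}^{-1}$ as in Lemma \ref{links}, the claim \eqref{l2} is equivalent to
\[
\sum_{j=c+1}^{k}(j-c)\,a_{n,k,j}\,p(j) \;<\; \sum_{j=0}^{c-1}(c-j)\,a_{n,k,j}\,p(j).
\]
The right-hand side is an explicit rational function of $n$, tending to $9/2$ (for $n$ even) or $55/8$ (for $n$ odd) as $n\to\infty$.

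To estimate the positive tail on the left I would invoke the generating-function identities from Lemma \ref{gen}, starting from $a_{n,k,j}\le(k/n)^j$, which follows from $(k-i)/(n-i)\le k/n$ for $k\le n$ and $0\le i\le j-1$. Writing $q=k/n$ and $F(q)=\prod_{i\ge 1}(1-q^i)^{-1}$, this gives
\[
\sum_{j=c+1}^{k}(j-c)\,a_{n,k,j}\,p(j) \;\le\; F(q)\Bigl(\sum_{i\ge 1}\frac{iq^i}{1-q^i}-c\Bigr)+\sum_{j=0}^{c-1}(c-j)\,p(j)\,q^j.
\]
At the limiting value $q=1/2$ we have $\sum_{i\ge 1}i/(2^i-1)\approx 2.74<3\le c$, so the bracketed quantity is strictly negative; combined with the prefix it produces a value (numerically $\approx 3.6$ for $c=3$ and $\approx 2.4$ for $c=4$) strictly less than the corresponding limit of the right-hand side. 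The explicit bounds of Lemma \ref{rest} quantify this estimate and establish \eqref{l2} for all $n$ beyond some explicit threshold $N_0$.

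For the finitely many remaining cases $4\le n<N_0$ one verifies \eqref{l2} by direct evaluation of the finite sum, as at the ends of the proofs of Lemmas \ref{links} and \ref{gr}. The main obstacle is calibrating $N_0$ so that the finite check stays tractable: the crude estimate $a_{n,k,j}\le(k/n)^j$ deteriorates once $k/n$ is noticeably above $1/2$, and for moderate $n$ the actual $a_{n,k,j}$ is much smaller than this geometric bound. To shrink $N_0$ one may either retain more exact terms $a_{n,k,j}p(j)$ on the positive side before switching to the geometric estimate, or sharpen the bound on the correction factor $\prod_{i=1}^{j-1}(1-i/k)/(1-i/n)$, both of which are routine but tedious.
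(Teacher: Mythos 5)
Your argument is correct and rests on the same core machinery as the paper's proof --- the bound $a_{n,k,j}\le (k/n)^j$, the generating-function identities of Lemma \ref{gen}, the tail estimates of Lemma \ref{rest} evaluated at a $q$ slightly above $1/2$, and a finite computational check below an explicit threshold (the paper takes $N_0=500$ and $q=252/500$) --- but the decomposition is genuinely different. The paper rewrites \eqref{l2} as $\sum_j j\binom{n-j}{k-j}p(j)<3\,p(n,k)$ (and $<4\,p(n,k)$ for $n$ odd), bounds the left side by roughly $9.97\binom{n}{k}$, and then must invoke Lemma \ref{gr}, i.e.\ $p(n,k)>\frac{1745}{512}\binom{n}{k}$, to push the right side above $\frac{5235}{512}\binom{n}{k}\approx 10.22\binom{n}{k}$; the resulting margin is only a few percent. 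You instead keep just the negative-weight terms $j<c$ on the right, which satisfy $\sum_{j=0}^{c-1}(c-j)a_{n,k,j}p(j)\ge 9/2$ (resp.\ $55/8$) because $a_{n,k,j}=\prod_{i=0}^{j-1}\frac{k-i}{n-i}\ge 2^{-j}$ for $j\le c-1$ when $2k-n\ge c+1$, and you fold the $-c\sum_j a_{n,k,j}p(j)$ piece into the generating-function estimate. This bypasses Lemma \ref{gr} entirely and leaves a much larger relative margin ($\approx 3.6$ against $9/2$, $\approx 2.5$ against $55/8$), which should make calibrating $N_0$ easier than in the paper. Two small points to make explicit in a final write-up: first, that the right-hand side really is bounded below by its limit (the $a_{n,k,j}\ge 2^{-j}$ remark above); second, that the step replacing $\prod_{i=1}^k(1-q^i)^{-1}\bigl(\sum_{i=1}^k\frac{iq^i}{1-q^i}-c\bigr)$ by $F(q)\bigl(\sum_{i\ge1}\frac{iq^i}{1-q^i}-c\bigr)$ is not termwise monotone since the bracket is negative; the discrepancy is $O(q^k)$ and has the right sign once $k>c/(1-q)$, but it deserves a line.
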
 
\begin{proof}
If $n$ is even, then $k=\frac{n+4}{2}$.
We can rewrite \eqref{l2} as
\begin{equation}\label{ungl}
\sum_{j=0}^k j\binom{n-j}{k-j}p(j)<3 p(n,k)
\end{equation}
It can be checked by computer that the inequality holds
for small $n$. We have used the computer algebra package Pari to verify it.
So we may assume, let us say, $n\ge 500$. Then $\frac{k}{n}\le q=\frac{252}{500}$
for all $n\ge 500$ and hence
\begin{equation*}
a_{n,k,j}  =\frac{k}{n}\cdot \frac{k-1}{n-1}\cdots \frac{k-j+1}{n-j+1}\le \left(\frac{k}{n}\right)^j \le q^j.
\end{equation*}
It follows
\begin{align*} 
\sum_{j=0}^k j\binom{n-j}{k-j}p(j)& =\binom{n}{k}\sum_{j=0}^{k} j\, a_{n,k,j} \, p(j)  < 
\binom{n}{k}\sum_{j=0}^{k} j p(j)q^j \\
& < \binom{n}{k}\sum_{j=0}^{\infty} j p_k(j)q^j <9.96868 \binom{n}{k}
\end{align*} 
The last inequality follows from Lemma $\ref{gen}$ and Lemma $\ref{rest}$:
for $q=\frac{252}{500}$ we have
\begin{align*}
\prod_{j=1}^{\infty} \frac{1}{1-q^j} & < 3.54029829 \\
\sum_{j=1}^{\infty}\frac{jq^j}{1-q^j} & < 2.81577392
\end{align*}
and therefore
\begin{equation*}
\sum_{j=0}^{\infty} j p_k(j)q^j\le
\sum_{j=1}^{\infty}\frac{jq^j}{1-q^j}\cdot \prod_{j=1}^{\infty}
\frac{1}{1-q^j} < 9.96867959.
\end{equation*}
On the other hand we have by Lemma $\ref{gr}$
\begin{equation*}
3p(n,k)>\frac{5235}{512}\binom{n}{k}
\end{equation*}
so that inequality \eqref{ungl} follows. If $n$ is odd then $k=\frac{n+5}{2}$ and the
proof works as before.
\end{proof}

\begin{lem}
For $1\le k\le n-1$ it holds
\begin{equation}
p(n,k)<\binom{n}{k}\prod_{j=1}^{\infty} \frac{1}{1-(\frac{k}{n})^j}.
\end{equation}
\end{lem}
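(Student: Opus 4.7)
The plan is to factor $\binom{n}{k}$ out of the sum defining $p(n,k)$ and then dominate the resulting ratio term-by-term by a power of $k/n$, after which the generating function identity from Lemma~\ref{gen} closes the argument.

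First I would write, with $a_{n,k,j}=\binom{n-j}{k-j}\binom{n}{k}^{-1}=k!(n-j)!/(n!(k-j)!)$ as in the proof of Lemma~\ref{rechts},
\[
p(n,k)=\binom{n}{k}\sum_{j=0}^{k} a_{n,k,j}\,p(j).
\]
Expanding $a_{n,k,j}=\prod_{i=0}^{j-1}\frac{k-i}{n-i}$ and using that $\frac{k-i}{n-i}\le\frac{k}{n}$ for $0\le i\le j-1$ whenever $k\le n$ (this is equivalent to $n(k-i)\le k(n-i)$, i.e.\ to $k\le n$), I get $a_{n,k,j}\le (k/n)^j$. For $k<n$ and $i\ge 1$ the inequality is strict, so for $j\ge 2$ the estimate $a_{n,k,j}<(k/n)^j$ is strict.

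Next I would observe that for $0\le j\le k$ every partition of $j$ has all parts $\le j\le k$, so $p(j)=p_k(j)$ on the range of summation. Therefore
\[
\sum_{j=0}^{k} a_{n,k,j}\,p(j) \;<\; \sum_{j=0}^{k} p_k(j)\left(\tfrac{k}{n}\right)^{j}\;\le\;\sum_{j=0}^{\infty} p_k(j)\left(\tfrac{k}{n}\right)^{j}.
\]
Since $k\le n-1$ we have $k/n<1$, so the series converges and by the first identity of Lemma~\ref{gen} it equals $\prod_{j=1}^{k}\frac{1}{1-(k/n)^j}$, which is bounded above by $\prod_{j=1}^{\infty}\frac{1}{1-(k/n)^j}$. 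Multiplying through by $\binom{n}{k}$ yields the claim.

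The only subtlety is ensuring strict inequality; this is guaranteed already at the step $a_{n,k,j}<(k/n)^j$ for $j\ge 2$, which applies since $p(2)=2>0$ and $k\ge 1$ implies the $j=2$ term is actually present. So there is no real obstacle; the proof is essentially a three-line estimate once one recognizes $a_{n,k,j}\le (k/n)^j$ and invokes the generating function of $p_k$.
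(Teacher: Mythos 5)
Your proof is correct and is essentially the paper's own argument: the paper likewise factors out $\binom{n}{k}$, bounds $a_{n,k,j}\le (k/n)^j$ exactly as in the proof of Lemma~\ref{rechts}, identifies $p(j)=p_k(j)$ for $j\le k$, and invokes Lemma~\ref{gen}. One tiny remark: for $k=1$ your strictness argument via the $j=2$ term does not apply (the sum stops at $j=1$), but strict inequality still holds because passing from $\sum_{j=0}^{k}$ to $\sum_{j=0}^{\infty}$ adds strictly positive terms.
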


\begin{proof}
Estimating as in the preceding lemma and applying Lemma $\ref{gen}$ we have 
\begin{equation*}
p(n,k)\le\binom{n}{k}\sum_{j=0}^{k} \left(\frac{k}{n}\right)^j p_k(j)<
\binom{n}{k}\prod_{j=1}^{\infty} \frac{1}{1-(\frac{k}{n})^j}
\end{equation*}
\end{proof}   

\section{Proof of the Theorems} 

{\bf Proof of Theorem \ref{th2}:} Assume that
$n\ge 4$ is fixed. We have already proved that $p(n,k)$
is unimodal. 
What we must show is that
$p(n,k)$ becomes maximal exactly for $k=\lfloor \frac{n+3}{2} \rfloor$.
We formulate this as two lemmas:

\begin{lem}
For $n\ge 4$ and $1\le k\le \lfloor\frac{n+3}{2}\rfloor$ we have
\begin{equation*} 
p(n,k-1) < p(n,k) 
\end{equation*}
\end{lem}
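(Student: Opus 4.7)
The plan is to reduce this lemma to Lemma~\ref{links} together with the unimodality already established in Corollary~\ref{uni}. The key observation is that the sum appearing in Lemma~\ref{links} is, up to a strictly positive prefactor, precisely the consecutive difference $p(n,k)-p(n,k-1)$.

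First I would derive the identity
\begin{equation*}
p(n,k)-p(n,k-1)=\frac{1}{n-k+1}\sum_{j=0}^{k}(n+1-2k+j)\binom{n-j}{k-j}p(j).
\end{equation*}
This rests on the routine binomial identity $\binom{n-j}{k-1-j}=\frac{k-j}{n-k+1}\binom{n-j}{k-j}$, which rearranges to $\binom{n-j}{k-j}-\binom{n-j}{k-1-j}=\frac{n+1-2k+j}{n-k+1}\binom{n-j}{k-j}$ for $0\le j\le k-1$. The boundary term $\binom{n-k}{0}p(k)=p(k)$ fits naturally into the sum because $(n+1-2k+j)|_{j=k}=n-k+1$ cancels the denominator. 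For $n\ge 4$ and $k\le\lfloor(n+3)/2\rfloor$ a short case check gives $n-k+1\ge (n-1)/2>0$, so the prefactor is strictly positive and the sign of $p(n,k)-p(n,k-1)$ coincides with the sign of the sum. Applying Lemma~\ref{links} at $k_0:=\lfloor(n+3)/2\rfloor$ immediately yields $p(n,k_0-1)<p(n,k_0)$.

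To extend this strict inequality to all $1\le k\le k_0$, I would invoke Corollary~\ref{uni}: translating the unimodality definition from $\ell=n-k$ back to $k$, the sequence $(p(n,k))_{k=0}^{n}$ is unimodal, and along its ascent every consecutive difference $p(n,k)-p(n,k-1)$ with $1\le k\le k^{*}$ is strictly positive, where $k^{*}$ denotes the peak position. If one had $k^{*}<k_0$, then unimodality would force $p(n,k_0)\le p(n,k_0-1)$, contradicting what was just proved. Hence $k^{*}\ge k_0$, and the strict inequality holds throughout $1\le k\le k_0$. All substantive content is already in Lemma~\ref{links}; the difference formula is bookkeeping and the unimodal propagation is a formal consequence, so I do not anticipate any real obstacle beyond what has been handled upstream.
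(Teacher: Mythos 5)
Your proposal is correct, and its core is the same as the paper's: both reduce the claim to the sign of $\sum_{j=0}^{k}(n+1-2k+j)\binom{n-j}{k-j}p(j)$ (the paper reaches this identity via the recursion $p(n+1,k)=p(n,k)+p(n,k-1)$, you via the ratio $\binom{n-j}{k-1-j}=\frac{k-j}{n-k+1}\binom{n-j}{k-j}$ --- these are the same computation), and both invoke Lemma~\ref{links} at $k_0=\lfloor\frac{n+3}{2}\rfloor$. The one place you diverge is the range $1\le k\le k_0-1$: the paper disposes of it directly by noting that there $n+1-2k+j\ge 0$, so every term of the sum with $j\ge 1$ is positive and the $j=0$ term is nonnegative, whereas you propagate the single strict inequality at $k_0$ backwards through the already-established unimodality of Corollary~\ref{uni}. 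Your route is valid (and mirrors exactly how the paper itself handles the decreasing half in the companion lemma); the paper's direct sign check is marginally more self-contained for this half, while yours makes the logical dependence on Corollary~\ref{uni} do slightly more work. Either way the argument closes, and your bookkeeping of the boundary term $j=k$ and of the positivity of the prefactor $n-k+1$ is accurate.
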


\begin{proof}
Using $p(n+1,k)-p(n,k)=p(n,k-1)$ we see that we have to prove
$2p(n,k)-p(n+1,k)>0$ which is equivalent to the following 
inequality:

\begin{equation*}
\sum_{j=0}^k (n+1-2k+j) \binom{n-j}{k-j}p(j)>0.
\end{equation*} 

But this is obvious for $1\le k\le \lf \frac{n+3}{2}\rf -1$, because
in that case the sum has for $j\ge 1$ only positive terms and the first
term with $j=0$ is nonnegative.
For $k=\lf \frac{n+3}{2}\rf $ there exist negative terms, but the claim
follows from Lemma $\ref{links}$.  

\end{proof}

\begin{lem}
For $n\ge 4$ and $\lfloor\frac{n+3}{2}\rfloor +1 \le k \le n$ we have
\begin{equation}\label{down} 
p(n,k-1) > p(n,k)
\end{equation} 
\end{lem}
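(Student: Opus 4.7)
The plan is to reduce \eqref{down} to the sign inequality that Lemma \ref{rechts} already settles at the single value $k = \lfloor\frac{n+3}{2}\rfloor + 1$, and then propagate the conclusion to every larger $k$ using the unimodality from Corollary \ref{uni}.

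First, mirroring the proof of the preceding lemma, I would use the recurrence $p(n+1,k) = p(n,k) + p(n,k-1)$ to rewrite $p(n,k-1) > p(n,k)$ as $2p(n,k) - p(n+1,k) < 0$. Expanding $p(n+1,k)$ via its definition and applying Pascal's identity $\binom{n+1-j}{k-j} = \binom{n-j}{k-j} + \binom{n-j}{k-j-1}$ to each binomial coefficient shows that $2p(n,k) - p(n+1,k)$ equals $(n-k+1)^{-1}$ times
\begin{equation*}
\sum_{j=0}^{k} (n+1-2k+j)\binom{n-j}{k-j}\, p(j),
\end{equation*}
so the two have the same sign throughout the range $k \le n$.

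Second, I would dispose of the boundary case $k = \lfloor\frac{n+3}{2}\rfloor + 1$: the negativity of the displayed sum at this value of $k$ is precisely the content of Lemma \ref{rechts}. Combining this single strict decrease with the preceding lemma, which provides strict monotonic increase of $p(n,\cdot)$ on the range $1 \le k \le \lfloor\frac{n+3}{2}\rfloor$, pins down the location of the peak of the unimodal sequence $k \mapsto p(n,k)$ guaranteed by Corollary \ref{uni} as exactly $K(n) = \lfloor\frac{n+3}{2}\rfloor$.

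Finally, the desired strict decrease
\begin{equation*}
p\bigl(n, K(n)\bigr) > p\bigl(n, K(n)+1\bigr) > \cdots > p(n, n-1) > p(n, n)
\end{equation*}
is then automatic from the very definition of unimodality. I expect the only conceptual subtlety to be this last shortcut: a naive attempt would try to mimic the $(k/n)^j \le q^j$ argument of Lemma \ref{rechts} uniformly in $k$, which would force a reworking of Lemma \ref{gr} with a variable quotient; invoking unimodality instead collapses the entire range $\lfloor\frac{n+3}{2}\rfloor + 1 \le k \le n$ down to the single boundary inequality already in hand.
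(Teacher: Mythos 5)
Your proposal is correct and follows essentially the same route as the paper: rewrite $p(n,k-1)>p(n,k)$ as the negativity of $\sum_{j=0}^k (n+1-2k+j)\binom{n-j}{k-j}p(j)$, settle the boundary case $k=\lfloor\frac{n+3}{2}\rfloor+1$ by Lemma \ref{rechts}, and propagate to all larger $k$ via the unimodality of Corollary \ref{uni}. Your explicit identification of the positive factor $(n-k+1)^{-1}$ linking $2p(n,k)-p(n+1,k)$ to that sum is a correct detail the paper leaves implicit.
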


\begin{proof}

The inequality is equivalent to
\begin{equation*}
\sum_{j=0}^k (n+1-2k+j) \binom{n-j}{k-j}p(j)<0.
\end{equation*} 
For $k=\lfloor\frac{n+3}{2}\rfloor +1$ 
it follows from Lemma $\ref{rechts}$. 
We can now apply the unimodality of $p(n,k)$, see Corollary $\ref{uni}$, to
obtain the lemma.
\end{proof}

{\bf Proof of Theorem \ref{th3}:} For $n<500$ the theorem can be checked by
computer. Using Sterling's formula we obtain
\begin{equation*}
\binom{n}{\lfloor\frac{n+3}{2}\rfloor}< \frac{2^n}{\sqrt{\pi n/2}}
\end{equation*}    
for all $n\ge 1$ and hence with $q=252/500$, $k(n)=\lfloor\frac{n+3}{2}\rfloor$,
$n\ge 500$
\begin{equation*}
p(n,k)\le p(n,k(n))<\binom{n}{k(n)}
\prod_{j=1}^{\infty}\frac{1}{1-q^j}< 3.54029829\cdot\frac{2^n}{\sqrt{\pi n/2}}<
\frac{2.825}{\sqrt{n}}2^n.
\end{equation*}

For the proof of the propositions we need the following lemma.

\begin{lem}\label{13}
Let $\al=\sqrt{\frac{2}{3}}\pi$. Then for $n\ge 3$ we have
\begin{equation*}
\frac{\sqrt{n}}{\sqrt{n+1}-1}<
1+\frac{\pi}{\sqrt{6n}}<
e^{\al\sqrt{n}\left(\sqrt{1+\frac{1}{n}}-1\right)}
\end{equation*}
\end{lem}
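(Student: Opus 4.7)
The plan is to handle the two inequalities separately, exploiting in both the conjugate identity
$\sqrt{1+1/n}-1 = \frac{1/n}{\sqrt{1+1/n}+1}$, which after multiplying by $\sqrt{n}$ reads
$\sqrt{n}\,(\sqrt{1+1/n}-1) = \frac{1}{\sqrt{n}+\sqrt{n+1}}$. This single identity collapses most of the algebra.

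For the left inequality I first rationalize the denominator:
$$\frac{\sqrt{n}}{\sqrt{n+1}-1} \;=\; \frac{\sqrt{n}(\sqrt{n+1}+1)}{n} \;=\; \sqrt{1+1/n}+\frac{1}{\sqrt{n}}.$$
The inequality then becomes $\sqrt{1+1/n}-1 < (\pi/\sqrt{6}-1)/\sqrt{n}$. Multiplying by $\sqrt{n}$ and applying the identity, this simplifies to
$$\sqrt{n+1}-\sqrt{n} \;=\; \frac{1}{\sqrt{n}+\sqrt{n+1}} \;<\; \frac{\pi}{\sqrt{6}}-1.$$
The left side is monotonically decreasing in $n$, so it suffices to verify at $n=3$, i.e.\ $2-\sqrt{3}<\pi/\sqrt{6}-1$, equivalently $3\sqrt{6}-3\sqrt{2}<\pi$, a numerical check ($\approx 3.106 < \pi$).

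For the right inequality, set $y := \al\sqrt{n}\,(\sqrt{1+1/n}-1)$; by the identity, $y = \al/(\sqrt{n}+\sqrt{n+1})>0$. Using the Taylor bound $e^y \ge 1+y+y^2/2$, it suffices to prove $\frac{1}{2}y^2 \ge \pi/\sqrt{6n}-y$. Noting that $\al=\pi\sqrt{2/3}$ gives $\al\sqrt{6}=2\pi$ and $\al^2=2\pi^2/3$, and using
$$\frac{1}{\sqrt{n}}-\frac{2}{\sqrt{n}+\sqrt{n+1}} \;=\; \frac{\sqrt{n+1}-\sqrt{n}}{\sqrt{n}(\sqrt{n}+\sqrt{n+1})} \;=\; \frac{1}{\sqrt{n}(\sqrt{n}+\sqrt{n+1})^2},$$
a direct computation yields
$$\frac{\pi}{\sqrt{6n}}-y \;=\; \frac{\pi}{\sqrt{6}\,\sqrt{n}\,(\sqrt{n}+\sqrt{n+1})^2},\qquad \frac{y^2}{2} \;=\; \frac{\pi^2}{3(\sqrt{n}+\sqrt{n+1})^2}.$$
After cancelling $(\sqrt{n}+\sqrt{n+1})^2$, the desired inequality reduces to $\pi\sqrt{6n}\ge 3$, which is trivial for $n\ge 1$.

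The main obstacle is the right inequality: the naive tangent bound $e^y\ge 1+y$ fails, because in fact $y<\pi/\sqrt{6n}$ (the gap is of order $n^{-3/2}$). One is forced to include the quadratic term $y^2/2$, which is of order $n^{-1}$ and therefore overwhelms the deficit; recognising this mismatch of orders is the key observation, after which the algebra collapses neatly thanks to the $\al\sqrt{6}=2\pi$ identity.
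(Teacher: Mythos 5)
Your proof is correct; I checked both halves and the algebra goes through. It is also a genuinely different (and tighter) execution than the paper's. The paper attacks both inequalities via Taylor approximations: for the right inequality it bounds $\sqrt{1+1/n}$ below by $1+\tfrac{1}{2n}-\tfrac{1}{8n^2}$ before applying $e^x>1+x+x^2/2$, and for the left inequality it takes reciprocals and expands $\bigl(1+\pi/\sqrt{6n}\bigr)^{-1}$, which only works for $n\ge 17$ and forces a separate finite verification for $3\le n\le 16$. You instead exploit the exact conjugate identity $\sqrt{n}\bigl(\sqrt{1+1/n}-1\bigr)=\sqrt{n+1}-\sqrt{n}=(\sqrt{n}+\sqrt{n+1})^{-1}$ throughout, which turns the deficit $\pi/\sqrt{6n}-y$ and the correction $y^2/2$ into exact closed forms; the right inequality then reduces to the trivial $\pi\sqrt{6n}\ge 3$ valid for all $n\ge 1$, and the left inequality reduces to a single monotone quantity checked once at $n=3$ (correctly identifying why the hypothesis $n\ge 3$ is needed there, since the claim fails at $n=2$). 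The shared kernel of both proofs is the observation you flag explicitly: $1+y$ alone is not enough because $y<\pi/\sqrt{6n}$, and one must keep the $y^2/2$ term, whose order $n^{-1}$ dominates the order-$n^{-3/2}$ deficit. Your version buys exactness and eliminates all case-checking except the one endpoint evaluation; the paper's version is more routine but pays for it with cruder estimates and a computational tail for small $n$.
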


\begin{proof}
Using the inequality
$$1+\frac{1}{2n}-\frac{1}{8n^2}<\sqrt{1+\frac{1}{n}}$$
and $\exp(x)>1+x+x^2/2$ for $x>0$
we obtain
\begin{align*}
e^{\al\sqrt{n}\left(\sqrt{1+\frac{1}{n}}-1\right)} & >
\exp \Bigl(\al\sqrt{n}\Bigl(\frac{1}{2n}-\frac{1}{8n^2}\Bigr)\Bigr) =
\exp \Bigl(\frac{\pi}{\sqrt{6n}}\Bigl(1-\frac{1}{4n}\Bigr)\Bigr)\\
 & > 1+\frac{\pi}{\sqrt{6n}}-\frac{\pi}{4n\sqrt{6n}}
+\frac{\pi^2}{12n}-\frac{\pi^2}{24n^2}
+\frac{\pi^2}{192n^3}\\
 & > 1+\frac{\pi}{\sqrt{6n}}
\end{align*}
for $n\ge 1$. On the other hand we have for $n\ge 17$
\begin{align*}
\frac{1}{1+\frac{\pi}{\sqrt{6n}}} & < 
1-\frac{\pi}{\sqrt{6n}}+\frac{\pi^2}{6n} < 1+\frac{1}{2n}-
\frac{1}{8n^2}-\frac{1}{\sqrt{n}}\\
 & < \sqrt{1+\frac{1}{n}}-\frac{1}{\sqrt{n}}=\frac{\sqrt{n+1}-1}{\sqrt{n}}
\end{align*}
Taking reciprocal values yields the second part of the lemma. For
$3\le n\le 16$ one verifies the lemma directly.
\end{proof} 

{\bf Proof of Proposition \ref{pr1}:} 
Let $\al=\sqrt{\frac{2}{3}}\pi$.
In \cite{APO}, section $14.7$ formula $(11)$, the following upper bound for
$p(n)$ is proved:
\begin{equation*}
p(n)<\frac{\pi}{\sqrt{6n}}e^{\al\sqrt{n}} \quad \text{for all} \;\; n\ge 1
\end{equation*}
We want to prove the proposition by induction on $n$.
By Lemma $\ref{13}$ we have 
\begin{equation*}
1+\frac{\pi}{\sqrt{6n}}<
e^{\al\sqrt{n+1}-\al\sqrt{n}}
\end{equation*}
which holds for all $n\ge 1$.
Assuming the claim for $n-1$ it follows for $n$:
\begin{align*}
p(n,n) =p(n-1,n-1)+p(n) & <e^{\al\sqrt{n}}+
\frac{\pi}{\sqrt{6n}}e^{\al\sqrt{n}} \\
         & = \left(1+\frac{\pi}{\sqrt{6n}}\right)e^{\al\sqrt{n}}<e^{\al\sqrt{n+1}}
\end{align*}

{\bf Proof of Proposition \ref{pr2}:} 
It follows from Lemma $\ref{13}$ that 
\begin{equation*}
\sqrt{n}e^{\al\sqrt{n}}<\left(\sqrt{n+1}-1\right)e^{\al\sqrt{n+1}}
\end{equation*}
By induction on $n$ and Proposition $\ref{pr1}$ we have:
\begin{align*}
p(n+1,n) =p(n,n)+p(n,n-1) & <e^{\al\sqrt{n+1}}+\sqrt{n}e^{\al\sqrt{n}}\\
         & <\sqrt{n+1}e^{\al\sqrt{n+1}}
\end{align*}

\end{document}